%
%
%
%
%
%
%
%
%
%
%
%
%
%
%

\documentclass{article} 
\usepackage{amsmath,amsfonts,amsbsy,amsthm,latexsym,amssymb,enumerate} 
\usepackage{graphicx}
\usepackage{graphics} 
\usepackage{float}
\usepackage{pictexwd} 
\usepackage{tabularx}

\newtheorem{theorem}{Theorem}[section] 
 
\newtheorem{lemma}[theorem]{Lemma} 
 
\newtheorem{definition}[theorem]{Definition}

\begin{document} 
\sloppy 
\title{~\\[-6ex] Simulations and a conditional limit theorem for intermediately subcritical branching processes in random environment }
\author{ \textsc{Ch. B\"oinghoff} 
  \thanks{Fachbereich Mathematik, Universit\"at Frankfurt, Fach 
     187, D-60054 Frankfurt am Main, Germany , boeinghoff@math.uni-frankfurt.de, kersting@math.uni-frankfurt.de} \hspace{.8cm} 
   \textsc{G. Kersting} $^*$}
 \maketitle 
\begin{abstract} 
Intermediately subcritical branching processes in random environment are at the borderline between two subcritical regimes and 
exhibit a particularly rich behavior. In this paper, we prove a functional limit theorem for these processes. It is discussed together with two other recently proved limit theorems for the intermediately
 subcritical case and illustrated by several computer simulations.     
\end{abstract}    

\begin{small} 
\noindent
\emph{MSC 2000 subject classifications.}  Primary 60J80, Secondary 
60F17\\
\emph{Key words and phrases.} Branching process, random environment, 
functional limit theorem \\
\emph{Running head.}  Branching processes in random environment 
 \end{small} 
\thispagestyle{empty} 

\newpage

\section{Introduction and main results} 
Branching processes in random environment are a discrete time model for the development of a population of individuals. In contrast to Galton-Watson processes, it is 
assumed that individuals are exposed to a random environment, which influences the reproductive success of the individuals and varies from one generation to the next in an 
i.i.d. manner. Given the environment, all individuals reproduce independently according to the same mechanism. 

More precisely, let $Q$ be a random variable taking values in the space $\Delta$ of probability measures on $\mathbb{N}_0$. Equipped with the total variation metric, 
$\Delta$ is a Polish space. An infinite sequence $\Pi=(Q_1,Q_2,\ldots)$ of i.i.d. copies of $Q$ is called \textsl{random environment}. We denote by $Q_n$ the offspring 
distribution of an individual in generation $n-1$ and by $Z_n$ the number of individuals in generation $n$. A sequence of $\mathbb{N}_0$-valued random 
    variables      $Z_{0},Z_{1},\ldots$ is called a 
 \emph{branching process in the random environment (BPRE)} $\Pi$, if $Z_{0} 
  $ is      independent of $ \Pi$   and   given 
   $ \Pi$ the  process $Z=(Z_{0},Z_{1},\ldots)$ is a Markov chain 
with 
 \begin{equation}  \label{transition} 
    \mathcal{L} \big(Z_{n} \; \big| \; Z_{n-1}=z, \, \Pi 
   = 
     (q_{1},q_{2},\ldots) \big) \ = \ q_{n}^{*z} 
\end{equation} 
  for every $n\in \mathbb{N}$, $z \in \mathbb{N}_0$ and 
     $q_{1},q_{2},\ldots \in \Delta$, where 
$q^{*z}$  is the $z$-fold 
convolution of the measure $q$. The corresponding probability  measure on the underlying probability space will be denoted by $\mathbb{P}$. 
In the following 
we assume  that the process starts with a single founding 
ancestor, $Z_0=1$ a.s. Throughout the paper, we shorten  $Q(\{y\}), q(\{y\})$ to $Q(y),q(y)$.

As it turns out, the fine structure of the offspring distributions is of secondary importance for the asymptotics of the BPRE. More precisely, 
the asymptotics of $Z$ is mainly determined by the \textsl{associated random walk}, which only contains information on the mean offspring number 
in each generation. 
The associated 
  random walk $S=(S_n)_{n\ge 0}$ is the random walk having initial state
$S_{0}=0$ and  increments   $X_n=S_n-S_{n-1}, \, n\ge 1$ 
defined by
\[ 
    X_{n} \ = \ \log m(Q_n),  
\] 
where 
\[ 
m(q)\ = \
\sum_{y=0}^{\infty} y  q(y) 
\] 
is the mean of the offspring distribution $q \in \Delta$. 
Assuming $Z_0=1$ a.s., it results that  
      the conditional expectation of 
 $Z_{n}$ given the environment~$ \Pi$ 
can be written as
\begin{equation}  \label{expect} 
  \mathbb{E}[Z_{n} \,| \,  \Pi \,]   \ = \ 
\prod_{k=1}^n m(Q_k)\ = \ e^{S_n} \quad 
\mathbb{P}\text{--a.s.} 
\end{equation} 
Averaging over the environment yields
\begin{equation}  \label{expect2} 
  \mathbb{E}[Z_{n} ]   \ = \ 
 \mathbb{E}[ e^{S_n} ] \ = \ \mathbb E[e^X]^n 
\end{equation}
with $X= \log m(Q)$.

There are several phase transitions present in the class of BPRE. They already become visible, when looking at the asymptotic survival probability.  From Jensen's inequality for $0 \le \lambda \le 1$
\[ \mathbb P(Z_n>0) \le \mathbb E[Z_n^\lambda] =\mathbb E\big[\mathbb E[Z_n^\lambda\mid \Pi]\big]\le \mathbb E\big[\mathbb E[Z_n\mid \Pi]^\lambda\big] = \mathbb E[e^{\lambda X}]^n \ . \]
Indeed under quite general conditions (see \cite{af_80,de,gl,gkv}) it holds that
\[ \lim_{n \to \infty} \mathbb P(Z_n>0)^{\frac 1n} = \inf_{0\le \lambda \le 1} \mathbb E[e^{\lambda X}] \ . \]

The formula suggests where the  phase transitions are located. This depends on the value $\lambda_{\min}$, where the moment generating function $\lambda \mapsto \mathbb E[e^{\lambda X}]$ has its minimum. One phase transition appears, when $\lambda_{\min} =0$, which essentially amounts to the condition $\mathbb E[X]=0$. Then $S$ is a recurrent random walk, and $Z$ is called a {\em critical} BPRE. For a detailed study we refer to \cite{agkv}.

The other phase transition turns up when $\lambda_{\min} =1$, which matches to the condition \[\mathbb E[Xe^X]=0\ . \] 
This condition in turn implies $\mathbb E[e^X]<1$ and $\mathbb E[X]<0$. Then $Z$ is called an {\em intermediately subcritical} BPRE and $S$ is a transient random walk with negative drift.

In the other cases $\lambda_{\min}<0$, $0<\lambda_{\min}<1$ and $\lambda_{\min}>1$ the BPRE $Z$ is called {\em supercritical}, {\em weakly subcritical} and {\em strongly subcritical}, respectively, a classification, which goes back to Afanasyev \cite{af_80} and Dekking \cite{de}.

As one would expect, BPREs exhibit a particularly rich behavior in the two transition cases. Here we focus on the intermediately subcritical case, namely on the behavior of the process up to time $n$, 
conditioned on the event $\{Z_n>0\}$, in the limit $n \to \infty$. A main concern of our paper is to exemplify its features by means of computer simulations. In doing so we shall discuss three functional limit theorems, which underly these simulations. Two are taken from the publication \cite{abkv11} (being the main basis of the present paper). The other limit theorem is proved below, which makes the second part of the paper. Intermediately subcritical BPREs have also been studied in \cite{af_01,gkv,va_04}. For a comparative discussion we refer the reader to \cite{bgk}.

For an intermediately subcritical BPRE it is natural to introduce a change to the probability measure $\mathbf{P}$, given by its expectation
\[ \mathbf{E}[ \varphi(Q_1,\ldots,Q_n, Z_0,\ldots,Z_n)] \ = \ \gamma^{-n} \mathbb{E}\big[\varphi(Q_1,\ldots,Q_n, Z_0,\ldots,Z_n)e^{S_n}\big]  \] 
for any $n\in\mathbb{N}$ and 
any measurable, bounded function $\varphi:\Delta^n\times \mathbb{N}_0^{n+1}\rightarrow\mathbb{R}$, 
with $ \gamma^n= \mathbb E[e^{S_n}]= \mathbb E[Z_n]$, thus
\[ \gamma \ = \ \mathbb{E} [e^{X}] \ . \]  
The condition $ \mathbb{E}[Xe^{X}]=0$ translates to
\[ \mathbf{E}[X] \ = \ 0 \ . \]
Therefore $S$ becomes a recurrent random walk under $\mathbf{P}$.

Let us pass to the assumptions. For $a \in \mathbb N$ denote
\[ \zeta(a) \ = \ \sum_{y=a}^\infty y^2 Q(y) \Big/ m(Q)^2 \ , \quad a \in \mathbb{N} \ , \]
which we refer to as the standardized truncated second moment of $Q$.

\paragraph{Assumption A.} {\em Let $X$ be non-lattice with 
\[  \mathbf{E} [ X] \ = \ 0  \ , \   0<\mathbf{E} [ X^2]<\infty \ .   \] 
Moreover let
\[  \mathbf{E} \big[ (\log^+ \zeta(a))^{2 + \varepsilon}\big] \ < \ \infty \]
for some $a \in \mathbb N$ and $\varepsilon >0$, where} $\log^+ x= \log(x \vee 1)$.\\

\noindent
The condition $\mathbf{E} [ X^2]<\infty $ is taken for convenience here. In the second half of the paper we shall replace it by a weaker assumption. See \cite{agkv} for examples where the last assumption is fulfilled. In particular our result holds for binary branching processes in random environment 
(where individuals have either two children or none) and for cases where $Q$ is a.s. a Poisson distribution or a.s. a geometric distribution.

Our first functional limit theorem concerns the stochastic processes $S^n=(S^n_t)_{0\leq t\leq 1}$, $n \in \mathbb N$, given by
\[S_t^n= S_{nt} \ , \quad 0\le t \le 1 \ .\]
Here and in the sequel we always shorten $\lfloor nt \rfloor$ to $nt$. Donsker's theorem  states that
\[ \frac{S^n}{\sigma\sqrt{n}}  \stackrel{d}{\to} W\]
in distribution on the Skorohod space $D[0,1]$, where $W=(W_t)_{0\leq t\leq 1} $ is a standard Brownian motion and
\[ \sigma^2 = \mathbf E[X^2] \ . \]
We denote by
\[ W^c =(W_t^c)_{0 \le t \le 1} \]
a process, which we call here a {\em conditional Brownian motion}, that is a Brownian motion conditioned to take its minimal value at $t=1$.

\begin{theorem}\label{theo1} 
Under Assumption A, it holds that as $n\rightarrow\infty$,
\[ \Big(\frac{S^n}{\sigma\sqrt{n}} \ \Big|\ Z_n>0\Big) \stackrel{d}{\to} W^c\]
in the Skorohod space.
\end{theorem}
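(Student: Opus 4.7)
I would pass via the change of measure to $\mathbf{P}$, under which $(S_k)$ is centered with finite variance $\sigma^2$. For any bounded continuous $\varphi$ on $D[0,1]$, the definition of $\mathbf{P}$ and the tower property give
\[ \mathbb{E}\big[\varphi(S^n/(\sigma\sqrt{n})) \,\big|\, Z_n>0\big] \ = \ \frac{\mathbf{E}\big[\varphi(S^n/(\sigma\sqrt{n}))\, G_n\big]}{\mathbf{E}[G_n]}, \qquad G_n := e^{-S_n}\,\mathbb{P}(Z_n>0\mid\Pi) \in [0,1], \]
reducing the theorem to a weighted conditional limit theorem for a centered random walk whose weight $G_n$ is $\Pi$-measurable.

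The classical recursion for BPRE generating functions (see e.g.~\cite{agkv}) provides the explicit representation $G_n = 1/(1+\sum_{k=1}^n \eta_k\, e^{S_{k-1}})$, where $\eta_k$ is a nonnegative functional of $Q_k$ whose logarithm has a $(2+\varepsilon)$-moment under $\mathbf{P}$ thanks to Assumption A on $\log^+\zeta(a)$. Heuristically $G_n$ is significant only where the walk stays low, in particular where the time $\tau_n := \min\{k : S_k = \min_{0\le j\le n} S_j\}$ of minimum is near $n$.

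The core task is to prove
\[ \mathbf{E}[G_n] \sim c_1/\sqrt{n} \quad\text{and}\quad \mathbf{E}\big[\varphi(S^n/(\sigma\sqrt{n}))\, G_n\big]/\mathbf{E}[G_n] \longrightarrow \mathbb{E}\big[\varphi(W^c)\big], \]
following the methodology developed in \cite{abkv11}. One decomposes the walk at $\tau_n$, invokes the fluctuation-theoretic invariance principle $(S_{n\cdot}/(\sigma\sqrt{n})) \Rightarrow W^c$ under $\mathbf{P}(\cdot\mid\tau_n=n)$, and shows that the continuous weight $G_n$ has the same asymptotic effect as the sharp indicator $\mathbf{1}\{\tau_n=n\}$. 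Tightness in $D[0,1]$ transfers from Donsker's theorem under $\mathbf{P}$ via the Gaussian-type tail bound on the modulus of continuity of $S^n/\sqrt{n}$, which is exponentially small and hence dominates the polynomial factor $1/\mathbf{E}[G_n]\sim\sqrt{n}$ introduced by the conditioning.

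\textbf{Main obstacle.} The hardest step is the bridge from the sharp conditioning $\{\tau_n=n\}$ to the continuous weight $G_n$. I expect this to require a truncation argument that discards atypical $\eta_k$'s (negligible by the $(2+\varepsilon)$-moment bound on $\log^+\eta_k$ inherited from Assumption A), followed by a factorization of the walk at ladder epochs on either side of $\tau_n$. The outcome should be that both conditionings concentrate on $\{\tau_n = n-O(1)\}$ and, after rescaling, produce the same limit $W^c$ with the same normalization $c_1$.
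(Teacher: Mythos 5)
The paper does not actually prove Theorem \ref{theo1}: as stated immediately after its statement, it is quoted from \cite{abkv11}[Theorem 1.3], and the only proof carried out in this paper (Section \ref{th}) is for Theorem \ref{cormain}. So there is no in-paper argument to compare your proposal against. That said, your high-level framework --- the change of measure to $\mathbf P$, the ratio $\mathbb E[\varphi\mid Z_n>0]=\mathbf E[\varphi\,G_n]/\mathbf E[G_n]$ with $G_n=e^{-S_n}\,\mathbb P(Z_n>0\mid\Pi)$, the decomposition of the walk at its minimum $\tau_n$, and a conditioned invariance principle for the centered walk --- is indeed the machinery of \cite{abkv11}, as one can infer from the facts the present paper borrows from it: $\mathbb P(Z_n>0)\sim\theta\gamma^n/b_n$, the regular variation of $\mathbf P(\tau_n=n)$, the distributional convergence of $n-\tau_n$ given $\{Z_n>0\}$, and the LPP-trest identity for $\mathbf E[\tilde Z_n\mid\Pi]$.

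There are, however, two concrete gaps. First, your claimed exact representation $G_n=1/(1+\sum_{k=1}^n\eta_k e^{S_{k-1}})$ is not an identity for general offspring laws; Agresti's bound gives only
\[
\mathbb P(Z_n>0\mid\Pi)\ \ge\ \Big(e^{-S_n}+\sum_{k=0}^{n-1}\eta_{k+1}e^{-S_k}\Big)^{-1},
\]
with equality only in the linear-fractional (in particular geometric) case. The paper is explicit about this in the proof of Lemma \ref{le52}, calling it a ``lower estimate,'' and Section \ref{sec2} reserves the exact formula for the geometric simulations. A matching upper control on the survival probability --- needed to squeeze the weight $G_n$ between quantities you understand --- is a nontrivial input that does not follow from this lower bound alone. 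Second, and more fundamentally, the central analytic steps are announced rather than carried out: you assert $\mathbf E[G_n]\sim c_1/\sqrt n$, you assert that the soft weight $G_n$ can be replaced by the hard event $\{\tau_n=n-O(1)\}$ without changing the limit, and you assert tightness transfers; but each of these is precisely where the work of \cite{abkv11} lies (ladder-epoch decompositions, regular-variation estimates, and uniform integrability under the moment assumption on $\log^+\zeta(a)$). As written, the proposal is a plausible road map toward the cited proof, not a proof.
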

\noindent
This theorem is taken from \cite{abkv11}[Theorem 1.3]. The statement turns out to be characteristic for intermediately subcritical BPREs.
It describes the impact on the random environment resulting from  conditioning on the event $\{Z_n>0\}$. Since $\mathbb P(Z_n>0)\le \mathbb E[Z_n]=\mathbb E[e^X]^n$,  note that
$\mathbb P(Z_n>0)$ is exponentially small such that we are in the range of large deviations. As usual there are distinct scenarios leading to different exponential 
rates for the probabilities, i.e. they require  different 'costs'. Let us discuss this trade-off in detail. 

First it is to be expected that $Z_n$ is asymptotically of order $O_P(1)$ conditioned on the event $\{Z_n >0\}$, since it would be too costly to build up a larger population. 
This enforces that $S_0, \ldots,S_n$ have their minimum close to the end, because otherwise the population would have the chance for a late growth. Theorem \ref{theo1}  confirmes this consideration, 
yet the same phenomenon turns up also for the weakly and strongly subcritical case, see \cite{abkv,agkv1}.

Next note that among random walk paths $S_0, \ldots,S_n$ with a late minimum  one can imagine two strategies for the BPRE $Z$ to survive until time $n$. Either $S$ builds up one big upward excursion. This is difficult to realize for a random walk with negative drift, but it provides an environment
 in which the branching process easily survives. Or $S$ is on and on decreasing. This is readily realized for a random walk with negative drift, but gives the branching process  a hard time to survive. Now the first alternative is realized for weakly subcritical  and the second for strongly subcritical BPREs, see \cite{abkv,agkv1}. 
Theorem \ref{theo1} indicates that in the intermediately subcritical case both possibilities compete with each other, which means that they are equally costly. Indeed upward excursions alternate with decreasing ladder points within $W^c$.

Theorem \ref{theo1} is our basis for the computer simulations of the conditional BPRE, given $\{Z_n>0\}$. Since this event has exponentially small probability, direct simulations are not realizable. Also an access via a suitable Doob $h$-transform seems out of reach. Therefore we present an approximate solution by first 
simulating a conditional random walk path $S_0^c,\ldots,S_n^c$  that is a random walk path $S_0, \ldots,S_n$ conditioned to have its minimum at time $n$. 
Here we also rely on additional information supplied by \cite{abkv11}[Theorem 1.3]: If   $\tau_n$ denotes the moment, when $S_0, \ldots,S_n$ attains 
its minimum for the first time, then   $n-\tau_n$ given $Z_n>0$ is convergent in distribution for $n \to \infty$. Given $S_0^c,\ldots,S_n^c$ we generate the random environment. We choose a situation where the random walk completely determines 
the environment (otherwise we could not simply rely on  Theorem \ref{theo1}). From a computational point of view it is convenient to choose geometric offspring distributions. 
Then given the environment the branching process $1=Z_0,Z_1, \ldots,Z_n$ conditioned to survive is efficiently generated by a general construction of the 
conditioned BPRE due to Geiger \cite{ge_99}. Altogether we replace the annealed situation in a way by a related quenched setting, which admittedly is only a substitute. 
The details are presented in Section \ref{sec2}. 

\paragraph{Remark.} The asymptotic shape of the limit distribution of $n-\tau_n$ given the event $\{Z_n>0\}$ can be easily derived: For $0\le k \le n$
\begin{align*}
\mathbb P(&n-\tau_n=k \mid Z_n >0) \\ &\le \frac{\mathbb P(\tau_{n-k}=n-k,Z_{n-k}>0) \mathbb P(\min(S_1, \ldots,S_k) \ge 0)}{\mathbb P(Z_n>0)} \ . 
\end{align*}
From \cite{abkv11} $\mathbb P(Z_{n-k}>0) \sim \gamma^k \mathbb P(Z_n>0)$ and $\mathbb P(\tau_n=n \mid Z_n>0)$ has a positive limit. From \cite{abkv}[Proposition 2.1]  
\[ \mathbb P(\min(S_1, \ldots,S_k) \ge 0) = \gamma^{k} \mathbf E[ e^{-S_k};\min(S_1, \ldots,S_k) \ge 0] \sim c'\gamma^k k^{-3/2}  \]
with some $c'>0$. Thus there is a $c>0$ such that for all $k \ge 0$
\[ \lim_{n \to \infty} \mathbb P(n-\tau_n=k \mid Z_n >0) \le \frac c{k^{3/2}} \ . \]
With some effort this upper estimate can be refined to an asymptotic  equality.\\\\
The following picture shows a path $S_0^c,\ldots,S_n^c$ of length $n=1000$.
\begin{center}
\includegraphics[angle=0,width=0.75\textwidth]{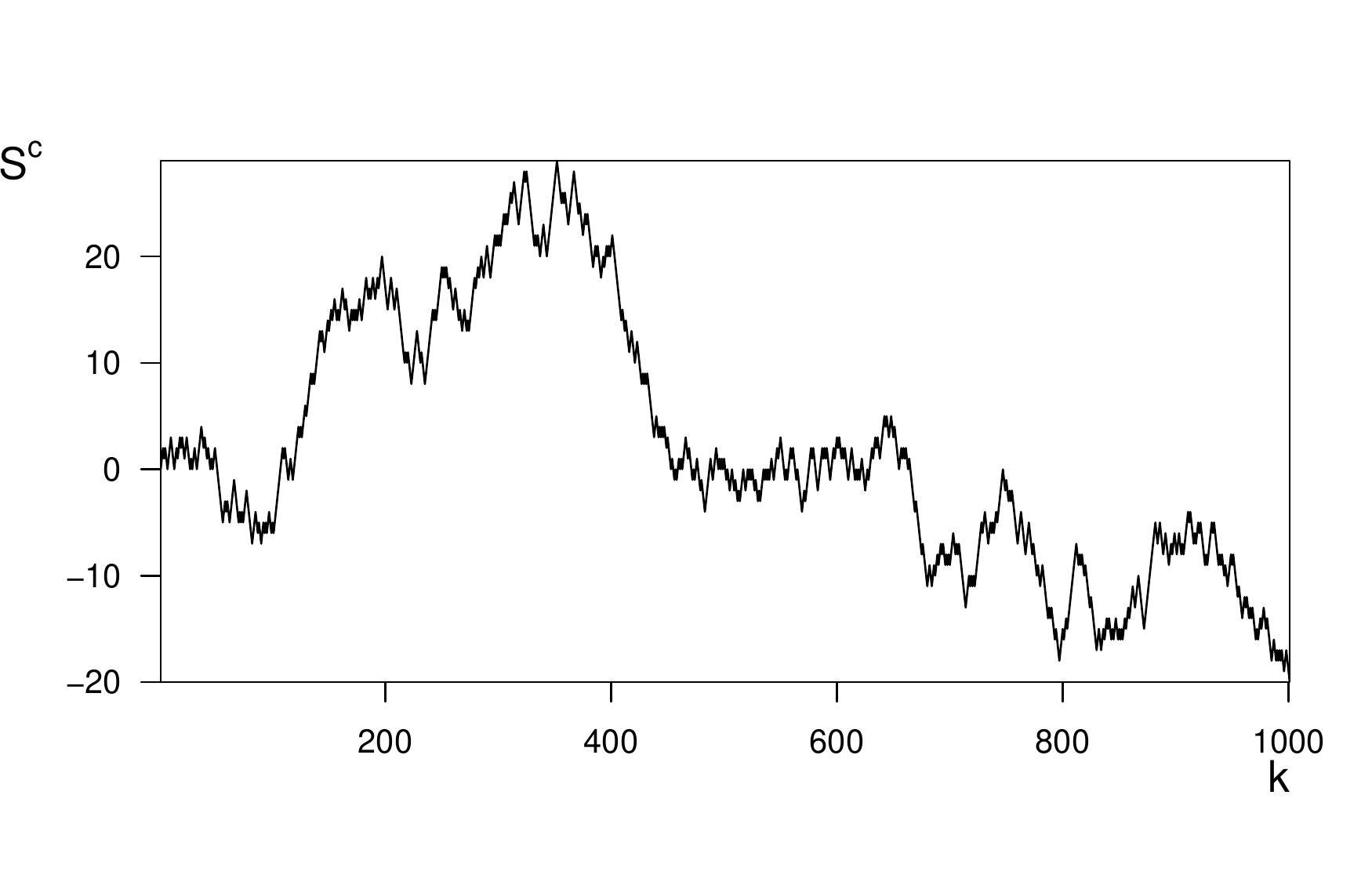}
\end{center}

From Theorem \ref{theo1} we expect the following behavior of $Z_0,Z_1, \ldots,Z_n$, conditioned on $Z_n>0$: At decreasing ladder points of $S$ the size $Z_k$ is close (or equal) to 1, whereas during upward excursions of $S$ the population size $Z_k$ is tied to the up and down of its 
conditional expectation $\mathbb E[ Z_k \mid \Pi]= e^{S_k}$ a.s. More precisely one would suspect that in either case $\log Z_k$ is close to 
$S_k-\min_{j\le k} S_j$, which is the height of the random walk relative to its height at the last ladder point. This leads us to conjecture that as $n\rightarrow\infty$
\[  \Big(\frac 1{\sigma \sqrt n} \log Z_{nt}  \ \Big|\ Z_n >0\Big) \stackrel{d}{\to}  W^r_t   \]
for $0 \le t \le 1$, where
\[  W^r_t=W_t^c - \min_{s \le t} W_s^c \ , \quad 0 \le t \le 1 \ ,\]
is the conditional Brownian motion {\em reflected} at its current minimum. Now a famous result of L\'evy says that for the unconditional Brownian motion $W$ the processes $W'=(W_t - \min_{s \le t} W_s)_{0\le t \le 1}$  and $(|B_t|)_{0 \le t \le 1}$ are equal in distribution, where $B$ denotes another Brownian motion. Conditioning $W$ to take its minimum at time 1 is equivalent to conditioning $W'_1$ to take the value 0. For $B$ this means that we pass over to a Brownian bridge.

\begin{theorem}\label{theo2} Under Assumption A, it holds that as $n\rightarrow\infty$,
\[ \Big( \Big(\frac 1{\sigma \sqrt n} \log Z_{nt}\Big)_{0\le t \le 1} \ \Big|\ Z_n >0\Big) \stackrel{d}{\to} |B|  \]
in the Skorohod space, where $B=(B_t)_{0\le t \le 1}$ denotes a standard Brownian bridge.
\end{theorem}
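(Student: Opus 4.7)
The plan is to reduce Theorem~\ref{theo2} to Theorem~\ref{theo1} via an approximation of $\log Z_k$ by the reflection $S_k-L_k$ of the associated random walk, followed by L\'evy's identity as spelled out in the discussion preceding the statement. Writing $L_k=\min_{0\le j\le k}S_j$, the strategy has three layers: (i) prove a pathwise approximation
\[ \frac{1}{\sigma\sqrt n}\max_{0\le k\le n}\bigl|\log Z_k-(S_k-L_k)\bigr|\ \xrightarrow{\ \mathbb P(\,\cdot\,\mid Z_n>0)\ }\ 0 \ ;\]
(ii) transport Theorem~\ref{theo1} to the reflected process $((S^n_t-\min_{s\le t}S^n_s)/(\sigma\sqrt n))_{0\le t\le 1}$ via the continuous mapping theorem; (iii) identify the resulting Brownian limit with $|B|$.

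The main work is in (i). I would use the Geiger decomposition of the conditioned BPRE that underlies Section~\ref{sec2}: on $\{Z_n>0\}$ the process $Z$ consists of a surviving spine, keeping $Z_k\ge 1$, together with independent subcritical offspring trees. The upper bound $\log Z_k\le S_k-L_k+o(\sqrt n)$ uniformly in $k$ follows by restarting the martingale $Z\cdot e^{-S}$ at the last weak descending ladder epoch before $k$ (at which the population has been reduced to a tight number of ancestors by arguments as in \cite{abkv11}) and applying a Doob/Markov inequality to the sup of the resulting non-negative martingale, with exceptional set controlled by a union bound over $k\le n$. For the lower bound, again restarting at the last ladder epoch before $k$, I would apply a quenched second-moment estimate $\mathbf E[Z_k^2\mid\Pi]\le C e^{2(S_k-L_k)}\,\zeta(\cdot)$ (using the tilted measure $\mathbf P$) together with the Paley--Zygmund inequality; Assumption~A on $\log^+\zeta(a)$ enters here exactly to bound the prefactor on $\mathbf P$-typical environments, giving $Z_k\gtrsim e^{S_k-L_k-o(\sqrt n)}$ with conditional probability tending to $1$. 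Tightness of the error in $k$ follows by using that the tilted random walk $S$ under $\mathbf P$ spends only $O(\sqrt n)$ units of time within any fixed distance of a given ladder height.

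Given (i), step (ii) is essentially mechanical: the reflection map $f\mapsto f-\min_{s\le\cdot}f(s)$ is continuous on $D[0,1]$ at continuous functions, and $W^c$ has continuous paths almost surely, so Theorem~\ref{theo1} and the continuous mapping theorem yield
\[ \Big(\tfrac{1}{\sigma\sqrt n}\bigl(S^n-\min_{s\le\cdot}S^n_s\bigr)\ \Big|\ Z_n>0\Big)\ \stackrel d\to\ W^r\ =\ W^c-\min_{s\le\cdot}W^c_s \ .\]
Combining with (i) through Slutsky gives $\bigl((\log Z_{nt})/(\sigma\sqrt n)\mid Z_n>0\bigr)\stackrel d\to W^r$. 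For (iii) I invoke exactly the identification already indicated before the theorem: L\'evy's theorem yields $W'\stackrel d= |B'|$ for a standard Brownian motion $B'$, where $W'_t=W_t-\min_{s\le t}W_s$, and conditioning $W$ to attain its minimum at $t=1$ is the same as conditioning $W'_1=0$, i.e.\ $|B'_1|=0$, turning $B'$ into a Brownian bridge $B$. Hence $W^r\stackrel d=|B|$, which completes the proof.

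The main obstacle is the uniform-in-$k$ pathwise approximation in (i), particularly the lower bound $\log Z_k\gtrsim S_k-L_k-o(\sqrt n)$ during long upward excursions; this is where the quantitative content of Assumption~A (the moment condition on $\zeta(a)$) is essential, and where one must carefully piece together the Geiger construction with the second-moment estimates for the tilted BPRE developed in \cite{abkv11,agkv}.
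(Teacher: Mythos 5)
Your plan is structurally different from the paper's. You aim for a \emph{uniform} pathwise approximation
\[
\tfrac{1}{\sigma\sqrt n}\max_{0\le k\le n}\bigl|\log Z_k-(S_k-L_k)\bigr|\to 0
\]
under $\mathbb P(\cdot\mid Z_n>0)$, and then push Theorem~\ref{theo1} through the reflection map. The paper instead proves the more general Theorem~\ref{cormain} by splitting into two independent pieces: convergence of \emph{finite-dimensional} distributions, imported wholesale from \cite{abkv11}[Theorems~1.3 and 1.4] (which already give $\tfrac 1{a_n}(\log Z_{nt}-(S_{nt}-\min_{s\le t}S_{ns}))\to 0$ at each fixed $t$ and the conditional invariance principle for $S$), plus a separate tightness argument via Aldous' criterion, supported by Lemmas~\ref{le51} and~\ref{le52} and the LPP/Geiger spine estimates. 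The paper never attempts a uniform approximation; it would be a strictly stronger statement, and the split FDD/tightness route lets one work with one (stopping-time-shifted) increment at a time.

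The gap in your argument is the lower bound in step~(i). Paley--Zygmund applied to $Z_k$ given $\Pi$ (restarted at the last ladder epoch) gives
\[
\mathbf P\bigl(Z_k>\theta\,\mathbf E[Z_k\mid\Pi]\ \big|\ \Pi\bigr)\ \ge\ (1-\theta)^2\,\frac{\mathbf E[Z_k\mid\Pi]^2}{\mathbf E[Z_k^2\mid\Pi]}\ ,
\]
and the moment ratio on the right is controlled by the inverse of the (truncated) standardized second moment, i.e. it is bounded away from~$1$, not tending to~$1$. Taking $\theta=e^{-o(\sqrt n)}$ costs you nothing in the $(1-\theta)^2$ factor but does not repair the moment ratio, so this yields a per-$k$ success probability bounded below by some constant, not $1-o(1/\sqrt n)$. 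Consequently the union bound over the $O_P(\sqrt n)$ ladder epochs (let alone over all $k\le n$) does not close; you would need a quantitative estimate on the left tail of the quenched martingale limit on each excursion of the conditioned spine, and nothing in Assumption~A directly supplies a rate there. Your steps (ii) and (iii) are fine and coincide with the identification sketched in the paper (reflection map continuity plus L\'evy's theorem turning $W^r$ into $|B|$); the issue is solely that step~(i), as argued, overreaches what Paley--Zygmund delivers. If you instead weaken (i) to fixed-$t$ statements (exactly what \cite{abkv11}[Theorem~1.4] provides) and supply a separate tightness estimate — for instance by controlling $\log Z_{(\kappa_n+\upsilon_n)\wedge n}-\log Z_{\kappa_n}$ via Markov/Doob on one side and a binomial survival count (Chebyshev) on the other, as the paper does — the proof goes through.
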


\noindent
For the linear fractional case convergence of finite dimensional distributions was obtained by Afanasyev \cite{af_01} (without identifying the limit). This theorem is proved in Section \ref{th}. We will now illustrate it by some simulations. The next  figure shows two paths corresponding to the path $S_0^c,\ldots,S_n^c$ of Figure 1: In black the path $S_0^r, \ldots, S_n^r$, given by
\[ S^r_k = S^c_k - \min_{j \le k} S^c_j \ ,\]
that is the conditional random walk, reflected at its current minimum, and in grey the path $\log Z_0, \ldots, \log Z_n$, where $1=Z_0, \ldots,Z_n$ denotes the branching process, given the environment determined by $S_0^c, \ldots, S_n^c$ and conditioned to survive within this environment. The fit of both paths is clearly visible.
\begin{center}
\includegraphics[angle=0,width=0.75\textwidth]{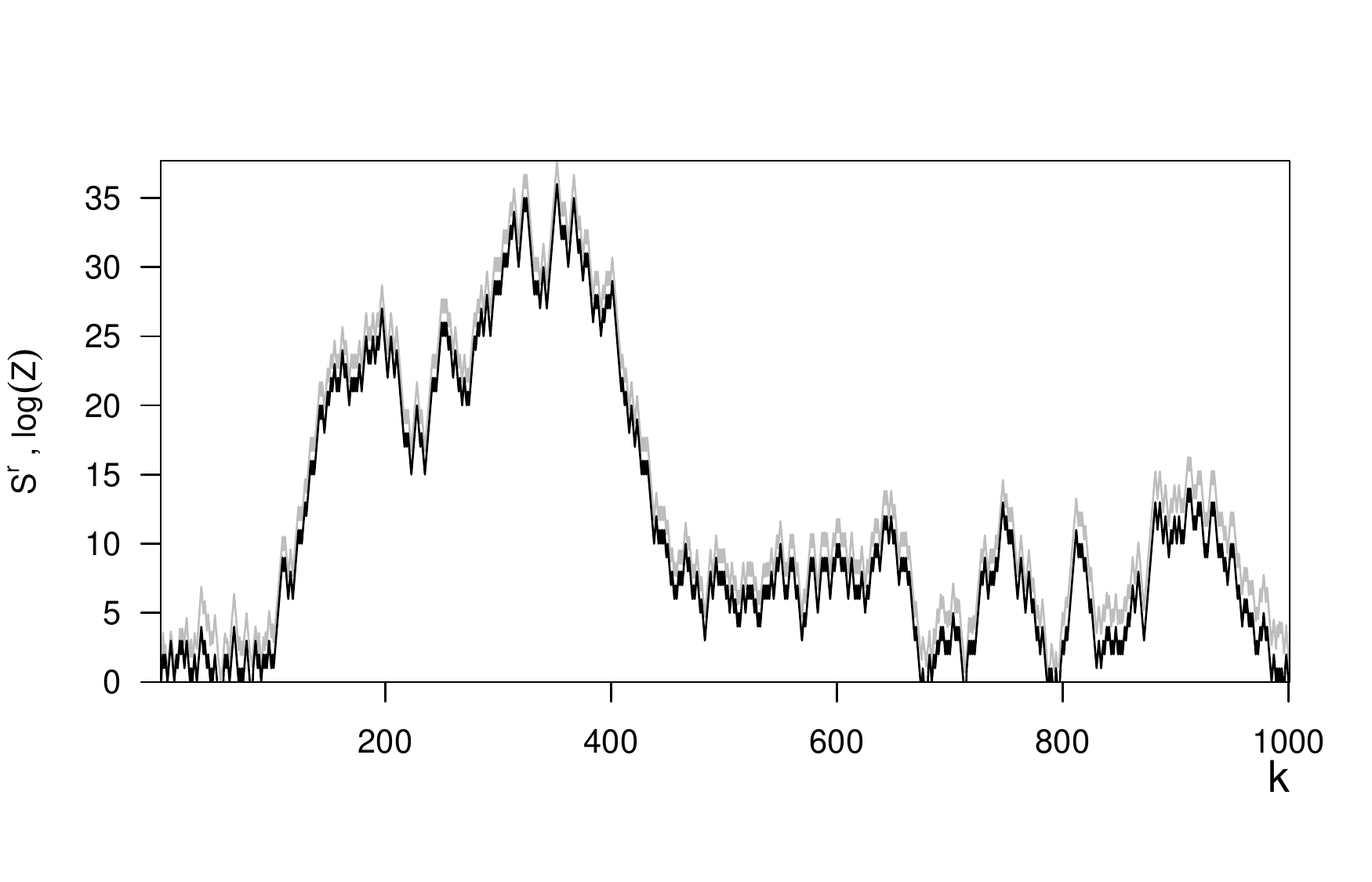}
\end{center}

The next theorem focuses on the difference between the grey and black processes in the last figure as well as on the magnitude of the population in ladder points of the random walk. It is taken from \cite{abkv11}[Theorem 1.4]. 
Recall that $\tau_{nt}$ is the moment, when $S_0, \ldots, S_{nt}$ takes its minimum.

\newpage

\begin{theorem}\label{theo3} Let $0 < t_1 < \cdots < t_r < 1$. For $i=1, \ldots,r$ let
\[ \mu(i) = \min\big\{ j \le i: \inf_{t \le t_j}W^c_t = \inf_{t \le t_i}W^c_t\big\} \ .\]
Then under Assumption A there are i.i.d. random variables $U_1, \ldots,U_r$ with values in $\mathbb N$  and independent of $W^c$ such that
\begin{align*}
  \big((Z_{ \tau_{nt_1}},\ldots,Z_{\tau_{nt_r}} ) \mid Z_n>0\big) 
 \stackrel{d}{\rightarrow}  (U_{ \mu(1)},\ldots,U_{ \mu(r)})  
\end{align*}
as $n \to \infty$. Also there are i.i.d. strictly positive random variables $V_1,  \ldots,V_r$ independent of $W^c$ such that
\begin{align*}
  \Big(\big( \frac{Z_{  n t_1 }}{e^{S_{  n t_1 }-S_{\tau_{nt_1}}} }  ,\ldots, \frac{Z_{  nt_r }}{e^{S_{  n t_r }-S_{\tau_{nt_r}}}}  \big) \ \big|\ Z_n>0\Big)   \stackrel{d}{\to}  (V_{ \mu(1)},\ldots,V_{ \mu(r)})   
\end{align*}
as $n \to \infty$.
\end{theorem}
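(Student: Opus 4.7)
The plan is to work under the associated measure $\mathbf{P}$, under which $S$ becomes a centered random walk. Combined with the asymptotics $\mathbb P(Z_n>0)\sim c\gamma^n n^{-1/2}$ (as used in \cite{abkv11}), the conditioning on $\{Z_n>0\}$ translates into a weighted expectation under $\mathbf P$ in which the classical descending-ladder epoch structure of $S$ is available. Let $0=L_0<L_1<L_2<\cdots$ be the strictly descending ladder times of $S$ under $\mathbf P$; then $\tau_k=L_{N_k}$ where $N_k:=\max\{j:L_j\le k\}$, so every $\tau_{nt_i}$ lies in this countable set of ladder indices.

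By Theorem \ref{theo1}, the rescaled walk $S^n/(\sigma\sqrt n)$ converges in Skorohod to $W^c$, jointly with the running minimum and with the argmin functionals on $[0,t_i]$. Hence $n^{-1}\tau_{nt_i}$ converges in law to $\arg\min_{s\le t_i}W^c_s$, and the coincidences $\tau_{nt_i}=\tau_{nt_j}$ in the prelimit correspond exactly, in the limit, to the equality $\mu(i)=\mu(j)$. The $r$-tuple therefore collapses onto a random partition indexed by $\mu$; writing $N$ for the number of distinct values of $\mu$, the first claim reduces to showing that for the $N$ distinct ladder indices $L_{k_1}<\cdots<L_{k_N}$ appearing among the $\tau_{nt_i}$, the joint law of $(Z_{L_{k_1}},\ldots,Z_{L_{k_N}})$ converges to that of $N$ i.i.d. copies of a single $\mathbb N$-valued random variable $U$, independent of $W^c$.

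The crucial decoupling is obtained via Geiger's construction \cite{ge_99} of the BPRE conditioned to survive, in which the surviving lineage forms a spine and the excursions of $S$ between consecutive descending ladder times are i.i.d. under $\mathbf P$. This yields a regenerative structure: the distribution of $Z_{L_k}$, together with the future in $(L_k,L_{k+1}]$, depends on the past only through $Z_{L_{k-1}}$, and, after conditioning on survival to time $n$ and taking $n\to\infty$, the past-dependence washes out so that the marginal of $Z_{L_k}$ converges to a limit $U$ and different epochs are asymptotically independent. For the second claim, write
\[
\frac{Z_{nt_i}}{e^{S_{nt_i}-S_{\tau_{nt_i}}}} \ = \ Z_{\tau_{nt_i}}\cdot\frac{Z_{nt_i}}{\mathbb E[Z_{nt_i}\mid Z_{\tau_{nt_i}},\Pi]} \ .
\]
Within the excursion of $S$ above $S_{\tau_{nt_i}}$, the second factor is a positive quenched martingale, and its limit as the excursion runs on (which under the conditioning is essentially a random walk conditioned to stay above its starting level) gives the variable $V$, strictly positive because the spine is alive. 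Across distinct epochs these martingale limits are i.i.d. by the same regenerative argument and independent of $W^c$ because the environment in distinct excursions is.

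The main obstacle is making this decoupling rigorous \emph{jointly} with the Skorohod convergence and the argmin functionals. One must (i) control the random number of ladder epochs that actually contribute (finiteness and convergence of $N$ to the functional of $W^c$), (ii) establish tightness of $(Z_{L_{k_j}})$ uniformly in $n$ under the conditioning, and (iii) rule out pathologies in which some $t_i$ lands too close to the tip of a short excursion, which would spoil the martingale-limit step for $V$. This is handled by truncation of long excursions, repeated application of the strong Markov property at each $L_k$, and duality/reversal arguments between the post-minimum and pre-minimum pieces of the conditioned random walk, all combined with the functional convergence provided by Theorem \ref{theo1}.
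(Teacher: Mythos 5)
The paper does not prove Theorem~\ref{theo3} at all: it states it and refers the reader to \cite{abkv11}[Theorem 1.4] for the proof. So there is no in-paper argument to compare yours against. That said, your sketch is broadly in the spirit of the spine/Geiger-decomposition machinery that \cite{abkv11} uses elsewhere, so the general direction is reasonable.

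The decisive gap is that your ``regenerative structure'' step is asserted, not established, and it is exactly where all of the difficulty lives. In the Geiger construction, the law of the subtrees hanging off the spine at generation $i$ is \emph{not} a function of the local environment near $i$: the weights in \eqref{geigerdis} involve the global conditional survival probabilities $\mathbf P_\pi(Z_n>0\mid Z_i=1)$ and $\mathbf P_\pi(Z_n=0\mid Z_i=1)$, which depend on the entire environment from $i$ to $n$. Likewise, conditioning the process on $\{Z_n>0\}$ and the environment on ``minimum near the end'' couples every excursion of $S$ to every other one. Hence the i.i.d.\ structure of the raw excursions of $S$ under $\mathbf P$ does not translate directly into independence of the $Z_{L_{k_j}}$; one has to show that, as $n\to\infty$, the survival weight seen from a ladder point localizes (so that the law of the population at a ladder epoch stabilizes to a fixed $U$ and different epochs decouple), and that this limiting law is simultaneously independent of the scaling limit $W^c$. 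You flag precisely these issues yourself in the last paragraph as ``main obstacles'' to be ``handled by truncation \dots\ strong Markov property \dots\ duality/reversal arguments,'' but no actual argument is given, so the proposal is a roadmap rather than a proof.

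Two further points worth naming. First, the convergence $n^{-1}\tau_{nt_i}\to\arg\min_{s\le t_i}W^c_s$ is fine because $W^c$ has a.s.\ unique minima on $[0,t_i]$ so the argmin functional is a.s.\ continuous at $W^c$, but you should say so; argmin is not continuous on $D[0,1]$ in general. Second, for the $V$-claim, the identity $Z_{nt_i}/e^{S_{nt_i}-S_{\tau_{nt_i}}}=Z_{\tau_{nt_i}}\cdot Z_{nt_i}/\mathbb E[Z_{nt_i}\mid Z_{\tau_{nt_i}},\Pi]$ is correct, but the second factor being a quenched martingale only helps if one also shows that the martingale limit is strictly positive a.s.\ \emph{under the conditioning} (not just that the unconditioned martingale limit is nonnegative) and that the product with the $\mathbb N$-valued factor $Z_{\tau_{nt_i}}$ can be rewritten as a single i.i.d.\ family $V_{\mu(i)}$. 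You assert this via ``the spine is alive,'' which is the right intuition but again is not an argument; in particular the martingale-limit random variable and $Z_{\tau_{nt_i}}$ are not independent, so one must track their joint law through the excursion decomposition.
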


\noindent
The first part confirms that the population size is of order $O(1)$ in ladder points. The meaning of the second statement has already been explained in \cite{abkv11}. Shortly speaking: Within upward excursions $Z_k/\exp(S_k- \min_{j \le k} S_j)$ takes asymptotically a constant value, whereas this value changes in an independent manner from one excursion to the next. This is expressed in the next two pictures. The first shows $Z_k/ \exp(S^r_k)$ for the random path $S_0^c, \ldots, S_n^c$ from Figure 1. 

\begin{center}
\includegraphics[angle=0,width=0.75\textwidth]{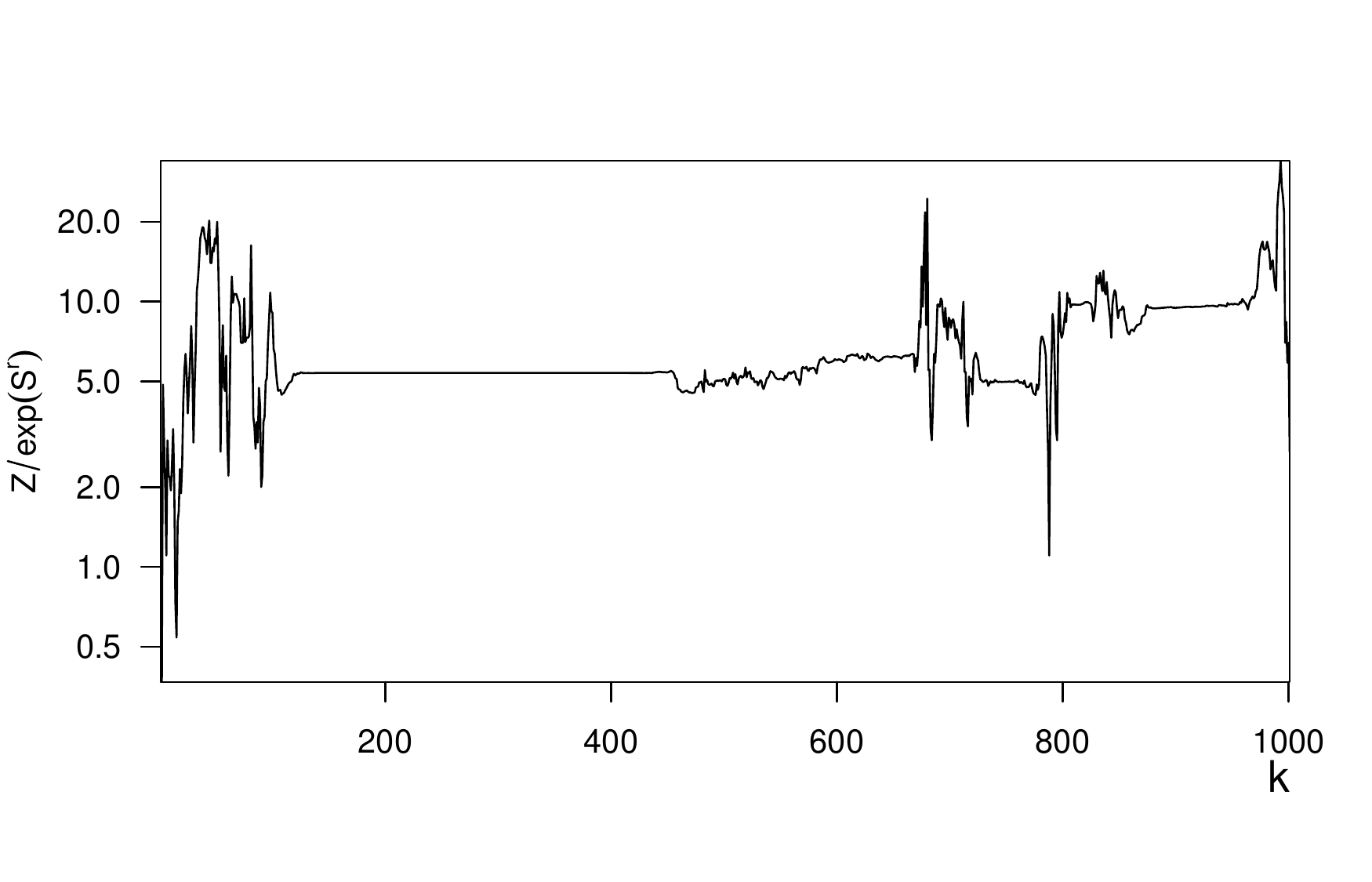}
\end{center}

Note that Theorem \ref{theo3} only deals with finite dimensional distributions, it cannot be extended to a functional limit theorem in Skorohod space in a standard manner, since the limiting process would consist of paths being constant within Brownian excursions but independent between different excursions. This leads to paths which are not c\`adl\`ag. This becomes manifest in the next picture with $n=100 000$. Note the heavy oscillations of the path between its constant sections.

\begin{center}
\includegraphics[angle=0,width=0.75\textwidth]{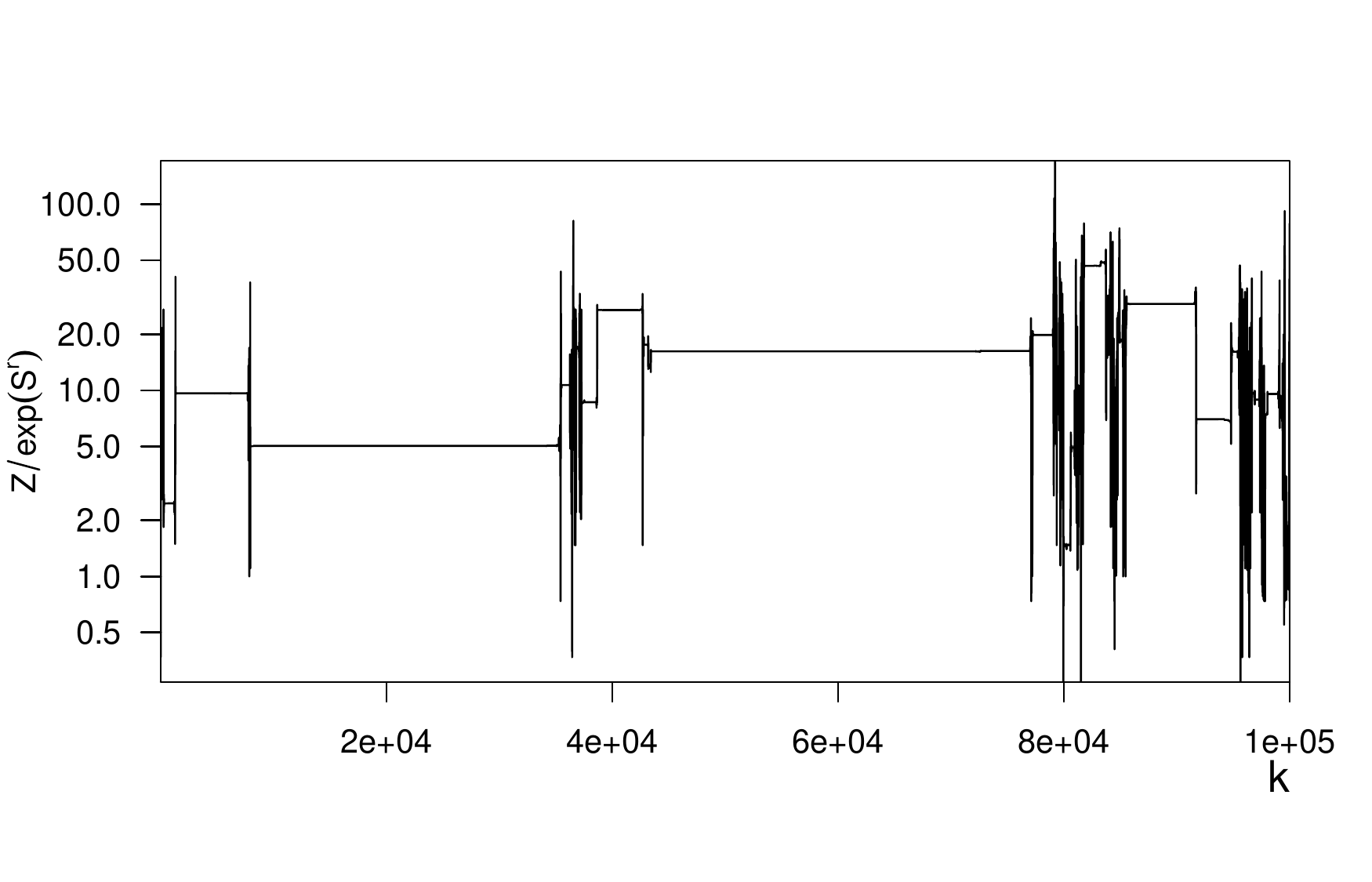}
\end{center}

The final picture shows the magnitude of the population, when restricted just to the strictly decreasing  ladder points of $S_0^c,\ldots, S_n^c$, which are 49 in our $n=1000$ example. One observes that this process is not just white noise. The dependence structure does not seem to be easily captured.

\begin{center}
\includegraphics[angle=0,width=0.75\textwidth]{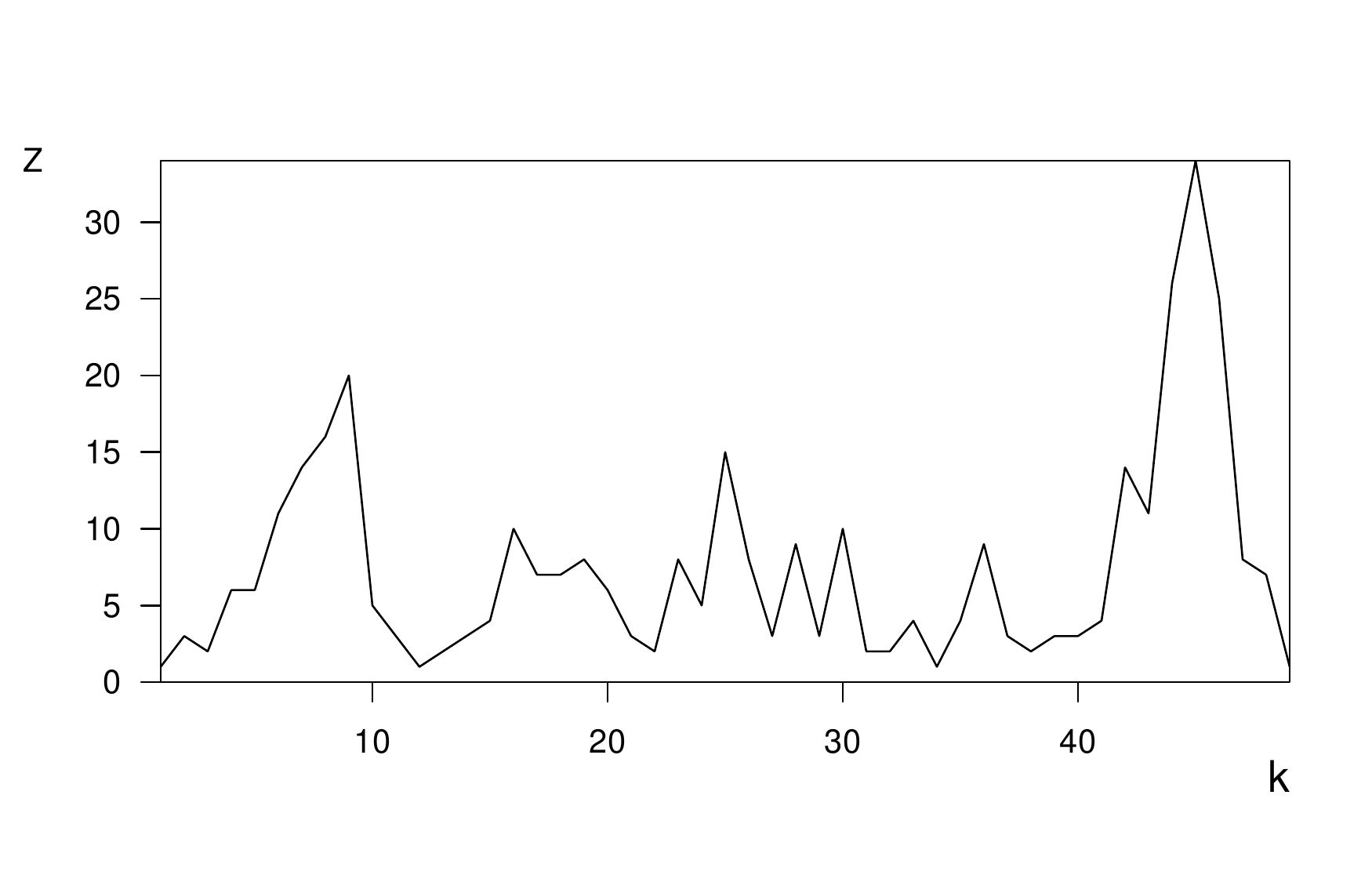}
\end{center}

The rest of the paper is organized as follows. In Section \ref{sec2} we assemble the facts, relevant for the simulations. In Section \ref{th} we give the proof of Theorem \ref{theo2} in a more general setting.

\section{Simulation of branching processes in random environment}\label{sec2}
In this section, we derive and describe the simulation algorithm which was used to sample the simulations presented in the previous section. 
\subsection{Simulation of a conditional random walk}
Theorem \ref{theo1} says that asymptotically as $n\rightarrow\infty$, the associated random walk is distributed like a random walk conditioned on having its minimum at the end. 
Thus here it is our concern to sample paths $S^c_0,S_1^c, \ldots, S_n^c$. Let us introduce for $n \ge 1$
\[  M_n \ = \ \max(S_1, \ldots,S_n) \ .\]
Introducing the dual random walk
\[\hat{S}_{n-i} =S_n-S_i \ , \ 0\leq i\leq n\ ,\]
we get that
\begin{align*}
\big((S_0, \ldots,S_i,\ldots,& S_n) \mid S_n < \min(S_0, \ldots, S_{n-1})\big)\\ &\stackrel d=  \big(\hat S_0, \ldots, \hat S_n- \hat S_{n-i}, \ldots, \hat S_n)  \mid \max(\hat S_1, \ldots, \hat S_n)<0\big) \ .
\end{align*}
Thus a random walk path, conditioned on $\{S_n < \min(S_0, \ldots, S_{n-1})\}$ can be sampled by simulating a random walk path, conditioned on $\{M_n<0\}$
and then deriving therefrom the dual path. Here, we only consider a simple random walk, i.e.
\[\mathbf{P}(X=1)=\mathbf{P}(X=-1)=\frac{1}{2} \ .\]
By Markov property, the distribution of $X_k$ conditioned on $\{M_n<0\}$ is for $1\leq k\leq n$ given by
\begin{align}
 \mathbf{P} (X_{k}=1\mid M_n<0, S_{k-1}=x) &= \frac 12 \frac{\mathbf{P}(M_n<0\mid S_{k}=x+1)}{\mathbf{P}(M_n<0\mid S_{k-1}=x)}\nonumber\\
&= \frac 12 \frac{\mathbf{P}(M_{n-k}<-x-1)}{\mathbf{P}(M_{n-k+1}<-x)} \ .\label{prw}
\end{align}
By the reflection principle, the distribution of the maximum is given by
\[\mathbf{P}(M_n\geq y)=\mathbf{P}(S_n\geq y)+\mathbf{P}(S_n>y) \ .\] 
For a simple random walk
\[\mathbf{P}(S_n=y)=\mathbf{P}\big(Y= (n-y)/2\big) \ ,\]
where $Y$ is binomially distributed with parameters $(n,1/2)$. Thus the probability in (\ref{prw}) is easily calculated and sampling paths 
of the conditional random walk is straightforward.

\subsection{Geiger construction}

In this section, we treat branching processes in varying environment, i.e. the 
environment $\pi=(q_1,q_2,\ldots)$ is fixed. We denote the underlying probability measure by $\mathbf P_\pi(\cdot)=\mathbb{P}(\cdot \mid \Pi=\pi)$. 

The Geiger construction allows to construct a branching process in varying environment, conditioned on $\{Z_n>0\}$, along its ancestral line 
(see e.g. \cite{ge_99, abkv11}). Following the notation in \cite{abkv11}, we denote by $\mathcal T_n$ 
the set of all ordered rooted trees of height exactly $n$, $n\in\mathbb{N}_0$ and 
$$\mathcal T_{\ge n}=\mathcal T_n\cup \mathcal T_{n+1} \cup \cdots \cup \mathcal T_\infty$$ the set of all trees of height at least $n$.

Let us introduce the operation $[\ ]: \ \mathcal T_{\ge n}\to \mathcal T_n$ of pruning a tree of height $\geq n$ to a tree of height exactly $n$ by eliminating 
all nodes of larger height.

A tree with a stem (following \cite{abkv11} called a {\em trest}) is defined by a pair
 \[ \mathsf{t} = (t, k_0 k_1\ldots k_n) \ , \]
where $t \in \mathcal T_{\ge n}$ and $k_0, \ldots,k_n$ are nodes in $t$ such that $k_0$ is the root 
(founding ancestor) and $k_{i}$ is an offspring of $k_{i-1}$.

The operation
\[ \langle t \rangle_n = ([t]_n, k_0(t) \ldots k_n(t))  \]
maps an ordered and rooted tree of height at least $n$ to the associated, unique trest of height $n$, 
where $k_0(t) \ldots k_n(t)$ is the {\em leftmost} stem, which can be fitted into $[t]_n$. 

Now, we are able to construct the conditional branching tree along its ancestral line. Let 
\[ \mathsf T_{n,\pi}= (T_n, K_0 \ldots K_n) \]
denote a random trest of height $n$ and let for $i=1, \ldots, n$
\begin{align*} 
T_i' &= \text{subtree within } T_n \text{ right to the stem with root } K_{i-1} \\ 
T_i'' &= \text{subtree within } T_n \text{ left to the stem with root } K_{i-1}  \\
R_i &= \text{size of the first generation of } T_i'   \\
L_i &= \text{size of the first generation of } T_i''   
\end{align*}
\begin{definition}[Geiger tree]
A random trest $\mathsf T_{n,\pi}$ is called a Geiger tree in the environment $\pi$ and its distribution is uniquely determined (up to possible offspring of $K_n$)
 if the following properties are fulfilled:
\begin{itemize}
\item The joint distribution of $R_i$ and $L_i$ is given by 
\begin{align}
 \mathbf P_\pi& ( R_i=r, L_i=l) \nonumber \\
&= q_i(r+l+1) \frac{\mathbf P_\pi(Z_n >0 \mid Z_i=1) \mathbf P_\pi(Z_n=0 \mid Z_i=1 )^{l}}{\mathbf P_\pi(Z_n >0 \mid Z_{i-1}=1)}\label{geigerdis}
\end{align}
\item $T_i'$ decomposed at its first generation consists of $R_i$ subtrees $\tau_{ij}'$, $j=1, \ldots, R_i$, which are branching trees within the fixed environment 
$(q_{i+1},q_{i+2}, \ldots)$.
\item  $T_i''$ consists of $L_i$ subtrees $\tau_{ij}''$, which are branching trees within the fixed environment $(q_{i+1},q_{i+2}, \ldots)$ conditioned on extinction
 before generation $n-i$.
\item All pairs $(R_i,L_i)$  and all subtrees $\tau_{ij}'$, $\tau_{ij}''$  are independent.
\end{itemize}
\end{definition}
The following theorem (see \cite{ge_99,abkv11}) assures that the Geiger and the (pruned) branching tree $T$ conditioned on $\{Z_n>0\}$ have the same distribution: 
\begin{theorem} \label{geigerconstruction} 
For allmost all $\pi$ the conditional distribution of $\langle T\rangle_n$ given  $\Pi=\pi,Z_n>0$ is equal to the distribution of $\langle \mathsf T_{n, \pi}\rangle_n$.
\end{theorem}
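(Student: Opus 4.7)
The plan is to proceed by induction on $n$, exploiting the branching decomposition of $T$ at the first generation. The base case $n = 0$ is trivial: both $\langle T \rangle_0$ and $\langle \mathsf T_{0, \pi}\rangle_0$ reduce to the one-vertex trest consisting of the root.

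For the inductive step I fix $n \ge 1$ and decompose $T$ at generation~$1$. Write $N = Z_1$ for the offspring count of the root and $T^{(1)}, \ldots, T^{(N)}$ for the ordered subtrees. Under $\mathbf P_\pi$, $N$ has law $q_1$ and, conditionally on $N$, the $T^{(j)}$ are independent branching trees in the shifted environment $\pi' = (q_2, q_3, \ldots)$. On $\{Z_n > 0\}$ I let $J$ be the smallest index with $T^{(J)}$ of height at least $n - 1$; then $K_1$ is the $J$-th child of the root, $L_1 = J - 1$ and $R_1 = N - J$ correspond to the first-generation trest data of $\langle T \rangle_n$.

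Computing directly from the independence of the subtrees,
\begin{align*}
\mathbf P_\pi(&R_1 = r,\, L_1 = l,\, Z_n > 0) \\
&= q_1(r + l + 1)\, \mathbf P_\pi(Z_n = 0 \mid Z_1 = 1)^{l}\, \mathbf P_\pi(Z_n > 0 \mid Z_1 = 1),
\end{align*}
since the $l$ left siblings must go extinct by generation $n - 1$ (viewed from generation~$1$), the $(l+1)$-st must survive to that generation, and the remaining $r$ are unconstrained. Dividing by $\mathbf P_\pi(Z_n > 0 \mid Z_0 = 1)$ reproduces exactly~\eqref{geigerdis} for $i = 1$. Moreover, the branching property combined with this conditioning implies that the three families of subtrees decouple: the $L_1$ left subtrees are i.i.d.\ branching trees in $\pi'$ conditioned on extinction before generation $n - 1$; the $R_1$ right subtrees are unconditional branching trees in $\pi'$; and $T^{(K_1)}$ is a branching tree in $\pi'$ conditioned on $\{Z_{n-1} > 0\}$.

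Applying the induction hypothesis to $\langle T^{(K_1)}\rangle_{n-1}$ in environment $\pi'$ yields the remaining Geiger data $(R_i, L_i, K_i)_{i=2}^n$ and all subtrees $\tau'_{ij}, \tau''_{ij}$ with $i \ge 2$ distributed as prescribed. Combined with the verified law of $(R_1, L_1)$ and the independence between the first-generation siblings and $T^{(K_1)}$, this shows that $\langle T \rangle_n$ under $\mathbf P_\pi(\cdot \mid Z_n > 0)$ has the distribution of $\langle \mathsf T_{n, \pi}\rangle_n$. The most delicate point will be the bookkeeping between the \emph{leftmost surviving} rule defining the stem of $\langle T \rangle_n$ and the extinction conditioning of the left Geiger subtrees; this equivalence is captured precisely by the factor $\mathbf P_\pi(Z_n = 0 \mid Z_1 = 1)^l$ above, which is exactly the probability that all $l$ left siblings root trees of height strictly less than $n - 1$.
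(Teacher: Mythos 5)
The paper does not supply a proof of Theorem~\ref{geigerconstruction}; it is quoted as a known result from Geiger \cite{ge_99} and \cite{abkv11}, so there is no ``paper's proof'' to compare against. Your inductive argument is nevertheless correct and is essentially the canonical route for establishing a Geiger-type construction. You correctly identify $\{Z_n>0\}$ with the event that some first-generation subtree has height at least $n-1$, let $J$ be the index of the leftmost such subtree so that $(L_1,R_1)=(J-1,\,N-J)$ matches the stem produced by $\langle\,\cdot\,\rangle_n$, and compute the joint law of $(R_1,L_1)$ on $\{Z_n>0\}$: the factor $q_1(r+l+1)$ from the offspring count, $\mathbf P_\pi(Z_n=0\mid Z_1=1)^l$ for the $l$ left siblings dying out before generation $n$, $\mathbf P_\pi(Z_n>0\mid Z_1=1)$ for the stem child surviving, and factor $1$ for the $r$ unconstrained right siblings; dividing by $\mathbf P_\pi(Z_n>0\mid Z_0=1)$ recovers \eqref{geigerdis} with $i=1$. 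The conditional decoupling of the left, stem and right subtrees into conditioned-to-die, conditioned-to-survive and unconditional copies is then a direct consequence of the conditional i.i.d.\ structure of first-generation subtrees given $N$. The one step you use implicitly and could usefully spell out is the \emph{shift-compatibility} of the Geiger construction: the subtrest of $\mathsf T_{n,\pi}$ rooted at $K_1$ has the same law as $\mathsf T_{n-1,\pi'}$ with $\pi'=(q_2,q_3,\ldots)$. This follows from \eqref{geigerdis} together with $q'_i=q_{i+1}$, $\mathbf P_{\pi'}(Z_{n-1}>0\mid Z_i=1)=\mathbf P_\pi(Z_n>0\mid Z_{i+1}=1)$ and the matching statement for extinction probabilities, so that the data $(R_i,L_i)_{i\ge 2}$ and the subtrees $\tau'_{ij},\tau''_{ij}$ with $i\ge 2$ inherit exactly the laws prescribed for $\mathsf T_{n-1,\pi'}$. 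With that observation made explicit, the induction closes and the proof is complete.
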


\subsection{Branching processes with geometric offspring distributions}
Following the previous section, we can sample a conditional branching tree in the random environment $\pi$ using the Geiger construction. 
Clearly, we require 
\[\rho_{i,n}=\mathbf{P}_\pi(Z_n>0\mid Z_i=1)\ .\]
In the case of geometric (or more generally linear fractional) offspring distributions, 
i.e. $q_i(k)=p_i (1-p_i)^k$, $p_i\in[0,1]$, a direct calculation is feasible. Using the formula for the generating function in
 \cite{kozlov06}[Equation (6) resp. (24)] yields
\[\rho_{i,n}=\Big(\sum_{k=j}^{n} e^{-(S_j-S_i)}\Big)^{-1}\ .\]
Then the distribution in (\ref{geigerdis}) can be written as
\begin{align}
 \mathbf P_\pi& ( R_i=r, L_i=l)= p_i(1-p_i)^{r+l+1} \frac{\rho_{i,n} (1-\rho_{i,n})^{l}}{\rho_{i-1,n}} \ . \label{dis2} 
\end{align}
The offspring distribution in generation $1\leq i\leq n$ in a subtree conditioned on extinction in generation $n$ is given by
\[\tilde q_i(k)=\frac{q_i(k) (1-\rho_{i,n})^k }{1-\rho_{i-1,n}}\ , \ k=1,2,\ldots\ \] 
If $q$ is geometric with parameter $p_i$, then 
\begin{align*}
\tilde q_i(k)& =\frac{p_i (1-p_i)^k (1-\rho_{i,n})^k }{1-\rho_{i-1,n}} = \frac{p_i}{1-\rho_{i-1,n}} \big((1-p_i)(1-\rho_{i,n})\big)^k
\end{align*}
is again geometric with parameter $1-(1-p_i)(1-\rho_{i,n})$. Note that from the definition of $X_i$ and $p_i$,
\[e^{X_i}=\frac{1}{p_i}-1\ .\]

Using the Geiger construction, all offspring numbers of 
indivuals in the conditioned and unconditioned subtrees are independent. As it is well-known, 
the sum of independent geometrically distributed random variables is negative binomially distributed. 

Now, let $Z_i^{(u)}$ be the total number of individuals in the unconditioned trees and $Z_i^{(c)}$ be the total number of individuals in the conditioned 
trees at generation $i$. Thus, following Theorem \ref{geigerconstruction}, we have the following simulation algorithm for 
a branching process in the varying environment $\pi=(q_1,q_2,\ldots)$, 
conditioned on $\{Z_n>0\}$:
\begin{itemize}
\item Calculate $\rho_{i,n}$ for $0\leq i\leq n$.
\item Given $Z_{i-1}^{(u)}$, simulate $Z_i^{(u)}$ as one negative binomially random variable of parameters $(Z_{i-1}^{(u)}, p_{i-1})$.
\item Given $Z_{i-1}^{(c)}$, simulate $Z_i^{(c)}$ as one negative binomially random variable of size parameter $Z_{i-1}^{(c)}$ and success probability   $1-(1-p_i)(1-\rho_{i,n})$.
\item Randomly simulate a pair $(R_i,L_i)$ according to the distribution (\ref{dis2}). Then add $L_i-1$-many individuals to $Z_i^{(c)}$ and $R_i-L_i$-many individuals to $Z^{(u)}_i$.
\item The total number of individuals in generation $i$ is given by $1+Z_i^{(u)}+Z_i^{(c)}$.
\end{itemize}
The simulation amounts to the simulation of one random pair and two independent negative binomially distributed random variables. This allows for very fast simulations.

\section{The functional limit theorem}\label{th}

Now we get down to the announced functional limit theorem. The assumptions are the same as in \cite{abkv11}:

\paragraph{Assumption A1.} {\em Let} $ \mathbf{E} [ X] =0$.

\paragraph{Assumption A2.} {\em The distribution of $X$ 
has  finite variance with 
respect to $\mathbf{P}$ or  (more generally) belongs to the domain of attraction of some zero mean stable law with index $\alpha \in (1,2]$. It is non-lattice.}\\\\
Since $\mathbf{E}[X]=0$ this implies that there is an increasing sequence of positive numbers
\[ a_n  \ = \ n^{1/\alpha} \ell_n \]
with a slowly varying sequence $\ell_1,\ell_2,\ldots$ such that 
\[ \frac {S_n}{a_n} \stackrel{d}{\to} L_1  \]
for $n\rightarrow\infty$, where $L_1$ denotes  a random variable with the above stable distribution. Note that due to the change of measure $X^-$ always has finite variance and 
an infinite variance may only arise from $X^+$. If $\alpha < 2$ this is called the spectrally positive case.

\paragraph{Assumption A3.} {\em For some $\varepsilon > 0$ and some $a \in \mathbb{N}$}
\[ \mathbf{E} [ (\log^+ \zeta(a))^{\alpha + \varepsilon}] \ < \ \infty \ ,\]
where $\log^+ x= \log(x \vee 1)$.\\\\
As is well-known, there is a L\' evy process $L=(L_t)_{0 \le t \le 1}$ including the stable random variable $L_1$ above. 
Let $L^c$ be the corresponding L\'evy process conditioned on having its minimum at time $t=1$. For the precise definition of such a process, we refer to \cite{abkv11}. 
Again define the process $L^r=(L^r_{0\le t \le 1})$, which is the process $L^c$ reflected at its current minimum and given by
\[ L^r_t = L^c_t- \min_{s\le t}L^c_s \ .\]

\begin{theorem}\label{cormain}
Under Assumptions A1 to A3, 
\begin{eqnarray*}
  \big((\tfrac 1{a_n} \log Z_{nt})_{0\leq t\leq 1} \ \big|\ Z_n>0\big) &\stackrel{d}{\rightarrow} &  L^r \ 
\end{eqnarray*}
in the Skorohod space.
\end{theorem}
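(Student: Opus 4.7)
The plan is to reduce Theorem \ref{cormain} to Theorem \ref{theo1} (in its A1--A3 generalization from \cite{abkv11}) by writing, for $0 \le t \le 1$,
\[
\frac{1}{a_n}\log Z_{nt} \ = \ \frac{1}{a_n}\bigl(S_{nt}-S_{\tau_{nt}}\bigr) \ + \ \frac{1}{a_n}\log\frac{Z_{nt}}{e^{\,S_{nt}-S_{\tau_{nt}}}} \ =: \ A^n_t+R^n_t,
\]
and establishing separately that $A^n\stackrel{d}{\to} L^r$ in $D[0,1]$ and that $R^n$ vanishes uniformly in probability.

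For the dominant summand $A^n$, I would invoke the conditional invariance principle $(S^n/a_n \mid Z_n>0)\stackrel{d}{\to}L^c$ in $D[0,1]$ (the A1--A3 version of Theorem \ref{theo1}) together with the continuity of the reflection map $\phi:x\mapsto (x_t-\inf_{s\le t}x_s)_{t\in [0,1]}$ at paths whose running infimum is continuous. Under Assumption A2 the limit $L^c$ has no negative jumps (it is Brownian when $\alpha=2$ and spectrally positive when $\alpha<2$), so a.s. its running infimum is continuous and $\phi(L^c)=L^r$. The continuous mapping theorem then yields $A^n\stackrel{d}{\to}L^r$.

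For the remainder, the extension of Theorem \ref{theo3} to Assumptions A1--A3 gives, at any finite collection of times $t_1,\ldots,t_r$, the tightness in $(0,\infty)$ of the ratios $Z_{nt_i}/e^{S_{nt_i}-S_{\tau_{nt_i}}}$, so that $R^n\to 0$ in finite-dimensional distributions. The main obstacle is to upgrade this to the uniform estimate
\[
\sup_{0\le k\le n}\Big|\log Z_k-(S_k-S_{\tau_k})\Big| \ = \ o_P(a_n)
\quad\text{under }\mathbb P(\,\cdot\mid Z_n>0).
\]

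My approach to this uniform bound is to inspect each strict descending-ladder excursion of the conditional random walk separately. On an excursion starting at a ladder time $\sigma$, the normalized population $W_k=Z_k\,e^{-(S_k-S_\sigma)}$ is a quenched nonnegative martingale whose initial value has the $O_P(1)$ distribution identified in Theorem \ref{theo3}, and whose maximal fluctuation is controlled by the quenched variance increments, whose annealed $(\alpha+\varepsilon)$-th moment is finite by Assumption A3 --- the same integrability lever that drives the proofs in \cite{abkv11}. Since the number $N_n$ of descending ladder epochs of $S$ up to time $n$ is of polynomial order $n^{\rho}$ with $\rho<1$ while $a_n^{\alpha+\varepsilon}$ grows like $n^{1+\varepsilon/\alpha}$ up to slowly varying factors, the ratio $N_n/a_n^{\alpha+\varepsilon}$ tends to $0$, so a union bound combined with a Doob-type maximal inequality applied to each excursion yields $\max_{k\le n}|\log W_k|=o_P(a_n)$. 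This uniform estimate forces tightness of $R^n$ at the zero path, and combined with the Skorohod convergence $A^n\stackrel{d}{\to}L^r$ it completes the proof.
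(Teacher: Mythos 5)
Your decomposition
\[
\frac{1}{a_n}\log Z_{nt} \;=\; A^n_t+R^n_t
\]
is exactly the right way to see where the limit $L^r$ comes from, and your treatment of the dominant part $A^n$ via the conditional invariance principle and continuity of the reflection map at paths with continuous running infimum is sound (the paper uses the same decomposition to identify the finite-dimensional limits). The place where your argument breaks down is the uniform estimate on the remainder $R^n$, and the gap is concrete.

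You claim that the quenched martingale $W_k=Z_k\,e^{-(S_k-S_\sigma)}$ on an excursion has ``variance increments whose annealed $(\alpha+\varepsilon)$-th moment is finite by Assumption A3.'' This misreads A3: the assumption is $\mathbf{E}\bigl[(\log^+ \zeta(a))^{\alpha+\varepsilon}\bigr]<\infty$, a moment bound on $\log\zeta$, \emph{not} on $\zeta$. The quenched second-moment increments of the normalized population martingale are governed by $\zeta$ (equivalently $\eta_i$), and under A3 these may have all polynomial moments infinite. So the Doob maximal inequality you invoke does not produce a tail estimate decaying polynomially in $e^{\delta a_n}$, and the comparison $N_n/a_n^{\alpha+\varepsilon}\to 0$ that underlies your union bound loses its force. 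Notice that the paper's Lemma~\ref{le51} works with precisely this difficulty: it only shows that $\sum_{i\le \upsilon_n}\eta_i e^{S_i}=o_P(e^{\delta a_n})$ for $\upsilon_n=o(n)$, by extracting a subexponential bound $\zeta_i(a)=O(\exp(i^{1/(\alpha+\varepsilon)}))$ from Borel--Cantelli; it never claims polynomial integrability of $\zeta$, and a window of macroscopic length $O(n)$ (an entire excursion) is out of reach by this route.

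A second difficulty is that the union bound over excursions treats them as if they were independent, but the whole construction lives under $\mathbb P(\cdot\mid Z_n>0)$. This conditioning ties all excursions together (it is what pins the minimum near time $n$ in the first place), and the joint law of the normalized martingale values across excursions is exactly the delicate object addressed by the Geiger/LPP machinery in \cite{abkv11}. One cannot pass to the unconditional (or purely quenched) law inside each excursion and then union bound: the cost of the event $\{Z_n>0\}$ must be redistributed across the excursions in a controlled way, and this is where most of the technical work lies.

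The paper avoids the uniform estimate altogether and proves tightness via Aldous' criterion: for stopping times $\kappa_n\le n$ and any $\upsilon_n=o(n)$ it shows that $\frac1{a_n}|\log Z_{(\kappa_n+\upsilon_n)\wedge n}-\log Z_{\kappa_n}|\to 0$ in conditional probability, using the strong Markov property at $\kappa_n$, Markov's inequality under the quenched law, the survival-probability asymptotics $\mathbb P(Z_n>0)\sim\theta\gamma^n/b_n$, and Lemmas~\ref{le51}--\ref{le52} for the short windows of length $\upsilon_n=o(n)$. This sidesteps both the lack of polynomial moments of $\zeta$ and the excursion-dependence problem by never trying to control a macroscopic sup directly. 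If you want to salvage your route you would need a genuinely new argument for the uniform bound that works with only polylogarithmic moments of $\zeta$ and respects the global conditioning; as written, the proposal does not supply one.
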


\noindent
The convergence of the finite-dimensional distributions follows directly from known results: From
\cite{abkv11}[Theorem 1.4] for all $0 \le t \le 1$
\[ \Big(\frac 1{a_n} (\log Z_{nt} - S_{nt}- \min_{s \le t}S_{ns}) \ \Big|\ Z_n>0\Big) \to 0 \]
in probability, and from \cite{abkv11}[Theorem 1.3]
\[ \Big( \frac 1{a_n} S^n \ \Big|\ Z_n>0 \Big) \stackrel d{\to} L^c \ . \]
For tightness, we require the following lemma. Let

\[ \eta_i = \zeta_i(1)= \sum_{y=1}^\infty y^2 Q_i(y) \Big/ m(Q_i)^2 \ . \]

\begin{lemma}\label{le51}
Under Assumptions A1 to A3, for every $\upsilon_n=o(n)$ and $\delta>0$,
\begin{align*}
  e^{-\delta a_n} \sum_{i=0}^{\upsilon_n-1} \eta_{i+1} e^{S_{\upsilon_n}-S_i} \to 0 \ 
\end{align*}
in probability with respect to $\mathbf{P}$.
\end{lemma}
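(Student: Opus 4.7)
The plan is to dominate the sum by a simple product of three factors and show that each factor's logarithm, divided by $a_n$, tends to zero in probability. Bounding every summand by the maximum and extracting the largest exponent,
\[ \sum_{i=0}^{\upsilon_n-1}\eta_{i+1}e^{S_{\upsilon_n}-S_i} \ \le\ \upsilon_n\ \cdot\ \max_{1\le j\le \upsilon_n}\eta_j\ \cdot\ \exp\Bigl(S_{\upsilon_n}-\min_{0\le i\le \upsilon_n}S_i\Bigr). \]
After multiplying by $e^{-\delta a_n}$ and passing to logarithms, the claim reduces to the three statements $\log \upsilon_n = o(a_n)$, $S_{\upsilon_n}-\min_{i\le \upsilon_n} S_i = o_{\mathbf P}(a_n)$, and $\log^+\max_{j\le\upsilon_n}\eta_j = o_{\mathbf P}(a_n)$.

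The first is immediate because $a_n=n^{1/\alpha}\ell_n$ is regularly varying with positive index. For the second, since $\upsilon_n=o(n)$ the functional stable invariance principle under $\mathbf{P}$ applied to the rescaled walk $(S_{\lfloor \upsilon_n t\rfloor}/a_{\upsilon_n})_{0\le t\le 1}$ shows that the range $S_{\upsilon_n}-\min_i S_i$ is $O_{\mathbf P}(a_{\upsilon_n})$; regular variation then yields $a_{\upsilon_n}/a_n\to 0$. Both of these bounds are rather soft.

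The third bound carries the real work. By a union bound,
\[ \mathbf{P}\Bigl(\max_{j\le\upsilon_n}\log^+\eta_j > \tfrac{\delta}{3}a_n\Bigr) \ \le\ \upsilon_n\,\mathbf{P}\bigl(\log^+\eta_1 > \tfrac{\delta}{3}a_n\bigr), \]
and Markov's inequality at exponent $\alpha+\varepsilon$ reduces the right-hand side to $O(\upsilon_n/a_n^{\alpha+\varepsilon})=O(n^{-\varepsilon/\alpha}\ell_n^{-(\alpha+\varepsilon)})$, which tends to $0$ provided $\mathbf{E}[(\log^+\eta_1)^{\alpha+\varepsilon}]<\infty$. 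This last moment condition is the genuine obstacle, since Assumption A3 is phrased for $\zeta(a)$ whereas $\eta=\zeta(1)$. I would bridge the gap through the elementary comparison
\[ \zeta(1)\ \le\ \zeta(a)+\frac{(a-1)^2}{m(Q)^2}\ \le\ \zeta(a)+(a-1)^2 e^{2X^-}, \]
using $\log(1/m(Q)^2)\le 2X^-$. Taking $\log^+$ and applying the power-mean inequality gives
\[ (\log^+\eta_1)^{\alpha+\varepsilon}\ \le\ C_a\bigl(1+(\log^+\zeta(a))^{\alpha+\varepsilon}+(X^-)^{\alpha+\varepsilon}\bigr), \]
whose first summand has finite expectation by A3 and whose second is finite by Assumption A2 once $\alpha+\varepsilon\le 2$.

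The main obstacle is precisely this conversion of the $\log$-moment from $\zeta(a)$ to $\eta=\zeta(1)$. In the spectrally positive regime $\alpha<2$ one may freely shrink $\varepsilon$ to arrange $\alpha+\varepsilon\le 2$, so the above argument runs through; the boundary case $\alpha=2$ calls either for a mild strengthening of the moment assumption on $X^-$, or for absorbing $1/m(Q_{i+1})^2=e^{-2X_{i+1}}$ against a factor $e^{X_{i+1}}$ already present in the exponent $e^{S_{\upsilon_n}-S_i}$ before one takes the maximum, which cancels the problematic negative-moment contribution of $X$. Once this point is secured, combining the three bounds shows that $\log\bigl(\sum_{i=0}^{\upsilon_n-1}\eta_{i+1}e^{S_{\upsilon_n}-S_i}\bigr)<\delta a_n$ with probability tending to $1$, completing the proof.
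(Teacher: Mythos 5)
Your proof is correct but follows a genuinely different route from the paper's. The paper first applies duality to rewrite the sum as $\sum_{i=1}^{\upsilon_n}\eta_i e^{S_i}$, splits $\eta_i \le a^2 e^{-2X_i}+\zeta_i(a)$, and then exploits the algebraic identity $S_i-2X_i=S_{i-1}-X_i$ to absorb the factor $e^{-2X_i}$ into a single $e^{-X_i}$, which is controlled via the law of large numbers (using $\mathbf{E}[e^{-X}]=\gamma^{-1}<\infty$); the remaining pieces are handled by Borel--Cantelli for $\zeta_i(a)$ and $M_{\upsilon_n}=o_P(a_n)$. You instead bound the whole sum by $\upsilon_n\cdot\max_j\eta_j\cdot e^{S_{\upsilon_n}-\min_i S_i}$ and control the three logarithms separately, with the third handled by a union bound plus Markov's inequality at exponent $\alpha+\varepsilon$. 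Both are sound, and your decomposition is arguably more transparent.

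One remark, though: your worry about the boundary case $\alpha=2$ is unfounded. You treat $\mathbf{E}[(X^-)^{\alpha+\varepsilon}]<\infty$ as contingent on Assumption A2 with $\alpha+\varepsilon\le 2$, but in fact the exponential change of measure gives $\mathbf{E}[e^{-X}]=\gamma^{-1}\mathbb{E}[e^{X}e^{-X}]=\gamma^{-1}<\infty$, hence $\mathbf{E}[e^{X^-}]\le 1+\gamma^{-1}<\infty$ and $X^-$ has \emph{all} polynomial moments under $\mathbf{P}$ --- exactly the observation the paper makes in the remark after Assumption A2 (``due to the change of measure $X^-$ always has finite variance''). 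So your comparison $(\log^+\eta_1)^{\alpha+\varepsilon}\le C_a\bigl(1+(\log^+\zeta(a))^{\alpha+\varepsilon}+(X^-)^{\alpha+\varepsilon}\bigr)$ has finite expectation for every $\alpha\in(1,2]$ and every $\varepsilon>0$, with no need to shrink $\varepsilon$ or strengthen the assumptions. Interestingly, your suggested alternative remedy --- absorbing $e^{-2X_{i+1}}$ against the neighbouring factor in the exponent --- is precisely the device the paper uses. Finally, a small bookkeeping point at the end: showing $\log\bigl(\sum\bigr)<\delta a_n$ with high probability for every $\delta>0$ establishes $\tfrac{1}{a_n}\log\bigl(\sum\bigr)\to 0$ in probability, from which $e^{-\delta a_n}\sum\to 0$ follows; stating only the former is one step short of the conclusion.
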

\begin{proof}
By duality
\begin{align*}
 \sum_{i=0}^{\upsilon_n-1} \eta_{i+1} e^{S_{\upsilon_n}-S_i}\stackrel{d}{=}\sum_{i=1}^{\upsilon_n} \eta_{i} e^{S_i}\ .
\end{align*}
Note that $\eta_i\leq a^2 e^{-2X_i}+\zeta_i(a)$, where $a$ is the constant from Assumption A3. Recall $M_n=\max(S_1, \ldots,S_n)$. Thus because of $S_i-2X_i=S_{i-1}-X_i$
\begin{align*}
 \sum_{i=1}^{\upsilon_n} &\eta_{i} e^{S_i} \leq a^2 \sum_{i=1}^{\upsilon_n} e^{S_i-2X_i}+\sum_{i=1}^{\upsilon_n} \zeta_i(a) e^{S_i}\\
&\leq a^2 e^{M_{\upsilon_n}^+} \sum_{i=1}^{\upsilon_n} e^{-X_i}+e^{M_{\upsilon_n}}\sum_{i=1}^{\upsilon_n} \zeta_i(a) \ .
\end{align*}
As $\mathbf{E}[e^{-X}]=\gamma^{-1} \mathbb{E}[e^{X}e^{-X}]=\gamma^{-1}<\infty$, the law of large numbers implies 
\[\sum_{i=1}^{\upsilon_n} e^{-X_i} = O(\upsilon_n) \quad \mathbf{P}\text{-a.s.} \]
Assumption A3 and  the Borel-Cantelli lemma imply
$(\log^+ \zeta_i(a))^{\alpha + \varepsilon}=O(i)$
respectively
\[\zeta_i(a)=O\Big(\exp\big(i^{1/(\alpha+\varepsilon)}\big)\Big) \quad \mathbf{P}\text{-a.s.} \]
Recall that $\upsilon_n=o(n)$ and $a_n=n^{1/\alpha} l(n)$, where $l(n)$ is slowly varying. Thus for every $\delta>0$,
\[\sum_{i=1}^{\upsilon_n} \zeta_i(a) =O\Big(\upsilon_n\exp\big(n^{1/(\alpha+\varepsilon)}\big)\Big) = O\big(e^{\delta a_n/3}\big) \quad \mathbf{P}\text{-a.s.} \]
As a consequence of the functional limit theorem \cite{ka}[Theorem 16.14], $\tfrac{1}{a_n} M_n$  converges in distribution  with respect to $\mathbf{P}$, thus
\[M_{\upsilon_n}=O_P\big(a_{\upsilon_n}\big) =o_P\big(a_n\big) \ .\]
Using these results 
\begin{align*}
 \sum_{i=1}^{\upsilon_n} \eta_{i}e^{S_i} =O_P\big(e^{\delta a_n{/2}}\big) \ .
\end{align*}
This yields the claim.
\end{proof}

The following lemma immediately results.
\begin{lemma}\label{le52}
Under Assumptions A1 to A3, for every $\upsilon_n=o(n)$ and $\delta>0$,
\begin{align*}
  e^{\delta a_n} \mathbf{P}(Z_{\upsilon_n}>0\mid \Pi) \to\infty \ 
\end{align*}
in probability with respect to $\mathbf{P}$.
\end{lemma}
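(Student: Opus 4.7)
The plan is to combine the Paley--Zygmund inequality with an elementary quenched second-moment calculation to obtain a workable lower bound on $\mathbf P(Z_{\upsilon_n}>0\mid\Pi)$, and then to invoke Lemma~\ref{le51}.

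First I would iterate the identity $\mathbf E_\pi[Z_n^2\mid Z_{n-1}] = Z_{n-1}^2 m_n^2 + Z_{n-1}\, m_n^2(\eta_n-1)$ to obtain the closed form
\[ \mathbf E_\pi[Z_n^2] \;=\; e^{2S_n}\Big(1 + \sum_{i=0}^{n-1}(\eta_{i+1}-1)\, e^{-S_i}\Big). \]
Since $\eta_{i+1}\ge 1$ by Cauchy--Schwarz, the Paley--Zygmund bound $\mathbf P_\pi(Z_n>0)\ge (\mathbf E_\pi Z_n)^2/\mathbf E_\pi[Z_n^2]$ yields, $\mathbf P$-a.s.,
\[ \mathbf P_\pi(Z_n>0) \;\ge\; \Big(1 + \sum_{i=0}^{n-1} \eta_{i+1}\, e^{-S_i}\Big)^{-1}. \]
It therefore suffices to prove that $\sum_{i=0}^{\upsilon_n-1}\eta_{i+1}\,e^{-S_i} = o_P(e^{\delta a_n})$ for every $\delta>0$.

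To relate this sum to the one controlled by Lemma~\ref{le51}, I would use the identity
\[ \sum_{i=0}^{\upsilon_n-1}\eta_{i+1}\,e^{-S_i} \;=\; e^{-S_{\upsilon_n}} \sum_{i=0}^{\upsilon_n-1}\eta_{i+1}\,e^{S_{\upsilon_n}-S_i} \]
and split the bound $e^{\delta a_n}$ as $e^{\delta a_n/2}\cdot e^{\delta a_n/2}$. Lemma~\ref{le51} with $\delta/2$ in place of $\delta$ handles the second factor directly. For the first factor, since $\upsilon_n=o(n)$ and $a_n=n^{1/\alpha}\ell_n$ is regularly varying with positive index, Potter's bounds give $a_{\upsilon_n}=o(a_n)$; combined with the tightness of $S_{\upsilon_n}/a_{\upsilon_n}$ under Assumption~A2, this yields $S_{\upsilon_n}=o_P(a_n)$ and in particular $e^{-S_{\upsilon_n}}\le e^{\delta a_n/2}$ with probability tending to $1$.

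Multiplying the two bounds gives $\sum_{i=0}^{\upsilon_n-1}\eta_{i+1}\,e^{-S_i}\le \varepsilon e^{\delta a_n}$ with probability tending to $1$, for every preassigned $\varepsilon>0$. Inserting this into the survival-probability lower bound yields $e^{\delta a_n}\,\mathbf P(Z_{\upsilon_n}>0\mid\Pi)\ge 1/(2\varepsilon)$ with probability tending to $1$, which on letting $\varepsilon\to 0$ is exactly convergence to $\infty$ in probability. The only mildly subtle step is controlling the change-of-variable factor $e^{-S_{\upsilon_n}}$; this is harmless thanks to the scale separation $a_{\upsilon_n}\ll a_n$ guaranteed by $\upsilon_n=o(n)$.
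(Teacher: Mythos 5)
Your proof is correct, and it departs from the paper's in two ways worth noting. First, for the quenched lower bound on the survival probability the paper simply cites Agresti's bound $\mathbf P_\pi(Z_n>0)\ge \bigl(e^{-S_n}+\sum_{k=0}^{n-1}\eta_{k+1}e^{-S_k}\bigr)^{-1}$, whereas you rederive the marginally weaker but entirely sufficient variant $\mathbf P_\pi(Z_n>0)\ge \bigl(1+\sum_{k=0}^{n-1}\eta_{k+1}e^{-S_k}\bigr)^{-1}$ from scratch via the quenched second moment and Paley--Zygmund; this makes the step self-contained at the cost of a slightly looser constant. Second, and more substantively, the paper's proof merely says the resulting estimate ``is proved in the same way as Lemma \ref{le51},'' i.e.\ it implicitly reruns the whole Lemma \ref{le51} machinery (the bound $\eta_i\le a^2e^{-2X_i}+\zeta_i(a)$, the law of large numbers, Borel--Cantelli, control of the running extremum) for the new sum $\sum_{k=0}^{\upsilon_n-1}\eta_{k+1}e^{-S_k}$. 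You instead reduce to Lemma \ref{le51} as a black box by factoring $e^{-S_k}=e^{-S_{\upsilon_n}}\,e^{S_{\upsilon_n}-S_k}$ and observing $e^{-S_{\upsilon_n}}=e^{o_P(a_n)}$; that is a genuine simplification, turning an ``argue analogously'' step into an explicit one-line reduction. The scale-separation step $a_{\upsilon_n}=o(a_n)$ is the only place that deserves a word of care: Potter's bounds give it when $\upsilon_n\to\infty$, and when $\upsilon_n$ stays bounded it is immediate since $a_{\upsilon_n}$ is bounded while $a_n\to\infty$; it would be cleanest to say this explicitly so as to cover every sequence with $\upsilon_n=o(n)$.
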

\begin{proof} We use the standard lower estimate for the survival probability (see e.g. \cite{ag} )
\begin{align*}
 \mathbf{P}(Z_{\upsilon_n}>0\mid \Pi)\geq \frac{1}{e^{-S_{\upsilon_n}} + \sum_{k=0}^{\upsilon_n-1} \eta_{k+1} e^{-S_k}}\qquad \text{a.s.}
\end{align*}
Thus it remains to show that 
\begin{align*}
e^{-\delta a_n} \Big(e^{-S_{\upsilon_n}} + \sum_{k=0}^{\upsilon_n-1} \eta_{k+1} e^{-S_k}\Big) \to 0\ 
\end{align*}
which is proved in the same way as Lemma \ref{le51}.
\end{proof}
\begin{proof}[Proof of Theorem \ref{cormain}.] 
It remains to prove tightness.  We use Aldous' criterium  (see e.g. \cite{ka}, p. 314) and show that for any sequence of positive constants $\upsilon_n=o(n)$, any sequence of stopping times $\kappa_1,\kappa_2,\ldots$, with $\kappa_n\le  n$  and any $\delta >0$
\begin{align} \mathbb{P}\big(\tfrac 1{a_n}|\log Z_{(\kappa_n+\upsilon_n)\wedge n} - \log Z_{\kappa_n}|>\delta\mid Z_n>0\big) \to 0 \label{aldous}
\end{align}
for $n \to \infty$. 
First let us fix $0<s<1$ and additionally assume $\kappa_n \le sn$. We show that in
\begin{align}
 \mathbb{P}&(\tfrac 1{a_n}|\log Z_{\kappa_n+\upsilon_n} - \log Z_{\kappa_n}|>\delta\mid Z_n>0) \nonumber\\ 
&= \mathbb P( Z_{\kappa_n+\upsilon_n} > e^{\delta a_n}Z_{\kappa_n} \mid Z_n >0) + \mathbb P( Z_{\kappa_n+\upsilon_n} < e^{-\delta a_n}Z_{\kappa_n} \mid Z_n >0) \label{eq6}
\end{align}
both right-hand terms converge to 0.

Let us treat the first summand in (\ref{eq6}). 
Because of the independence properties of a BPRE, it follows that for stopping times $\kappa_n$
\begin{align}
 \mathbb P( &Z_{\kappa_n+\upsilon_n} > e^{\delta a_n}Z_{\kappa_n} , Z_n >0) \nonumber\\ &= \sum_{z \ge 1, k \le s n}\mathbb P(\kappa_n=k,Z_k=z) 
\mathbb P_z (Z_{\upsilon_n}> e^{\delta a_n} z, Z_{n-k}>0) \label{eq7}
\end{align}
with $\mathbb P_z( \cdot) = \mathbb P(\cdot \mid Z_0=z)$. As to the right-hand probability we distinguish two possibilities: Either one of the $z$ initial individuals has at least one offspring in generation $n-k$ and more than $e^{\delta a_n}$ offspring in generation $\upsilon_n$. Or it has at least one offspring in generation $n-k$ and the other $z-1$ individuals have more than $(z-1)e^{\delta a_n}$ offspring in generation $\upsilon_n$. Thus a.s.
\begin{align}
 \mathbb P_z &(Z_{\upsilon_n}> e^{\delta a_n} z, Z_{n-k}>0\mid \Pi)  \leq z\mathbb P(Z_{\upsilon_n}> e^{\delta a_n}, Z_{n-k}>0\mid \Pi) \nonumber\\
& \quad+  z\mathbb P(Z_{n-k}>0\mid \Pi)\cdot  \mathbb{P}_{z-1}(Z_{\upsilon_n}> (z-1) e^{\delta a_n}\mid \Pi) \ . \label{eq18}
\end{align}
As to the last term by means of Markov's inequality a.s.
\[  \mathbb{P}_{z-1}(Z_{\upsilon_n}> (z-1) e^{\delta a_n}\mid \Pi) \le \frac1{(z-1)e^{\delta a_n}} \mathbb E_{z-1} [Z_{\upsilon_n} \mid \Pi] = e^{-\delta a_n}  e^{S_{\upsilon_n}} \ ,  \] 
thus
\begin{align*}
\mathbb{E} \big[&\mathbb{P}(Z_{n-k}>0\mid\Pi)\cdot  \mathbb{P}_{z-1}(Z_{\upsilon_n}> (z-1) e^{\delta a_n}\mid \Pi)\big] \\
&\leq \mathbb{E}\big[Z_{\upsilon_n}\mathbb{P}(Z_{n-k}>0\mid\Pi, Z_{\upsilon_n}=1)\cdot 1\wedge (e^{-\delta a_n}  e^{S_{\upsilon_n}}) \mid\Pi) \big]\\
&= \mathbb{E}\big[e^{S_{\upsilon_n}}\cdot 1\wedge e^{S_{\upsilon_n}-\delta a_n}\big] \mathbb E[\mathbb{P}(Z_{n-k}>0\mid\Pi, Z_{\upsilon_n}=1)] \\
&= \gamma^{\upsilon_n} \mathbf{E}\big[1\wedge e^{S_{\upsilon_n}-\delta a_n}\big] \mathbb{P}(Z_{n-k-\upsilon_n}>0) \ .
\end{align*}
As $S_{\upsilon_n}-\delta a_n\rightarrow-\infty$ in probability with respect to $\mathbf{P}$ it follows 
by dominated convergence for $n \to \infty$
\[\mathbf{E}\big[\big(1\wedge e^{S_{\upsilon_n}-\delta a_n}\big)\big]\rightarrow 0 \ .\]
Also, applying the remarks after \cite{abkv11}[Theorem 1.1], $\mathbb P(Z_{n}>0)\sim\theta \gamma^{n}/b_n$, where $b_n$ is regularly varying with exponent $1-\alpha^{-1}$. Thus
\begin{align*}
 \gamma^{\upsilon_n} \mathbb{P}(Z_{n-k-\upsilon_n}>0) = O\big(\mathbb{P}(Z_{n-k}>0)\big)
\end{align*}
uniformly in $k \le sn$ and consequently
\begin{align*}
  \mathbb{E}&\big[\mathbb{P}(Z_{n-k}>0\mid \Pi)\cdot  \mathbb{P}_{z-1}(Z_{\upsilon_n}> (z-1) e^{\delta a_n}\mid \Pi)\big] = o( \mathbb{P}(Z_{n-k}>0))
\end{align*}
uniformly in $z\ge 1$ and $k \le sn$.

Next, let us show the same statement for the other term in (\ref{eq18}). For this, we will use \cite{abkv11}[Theorem 4.2]. 
Let $\tilde{\mathsf{T}}$ be the LPP-trest defined in 
\cite{lpp,abkv11} and $\tilde Z_n$ its population size in generation $n$. As above, let
\[ \tau_n = \min \{ k \le n : S_k = \min(S_0, \ldots, S_n)\} \ . \]
Let $m\in\mathbb{N}$ be fixed. Without loss of generality, we write $n$ instead of $n-k\geq (1-s)n$ in 
the following estimates.

In \cite{abkv11}[Equation (4.9)], it is shown that
\[ \mathbf{E}\big[\tilde Z_n\mid\Pi\big]=1+\sum_{i=0}^{n-1} \eta_{i+1} e^{S_{n}-S_i}\ . \]
Using this together with Markov inequality yields
\begin{align*}
 \mathbf{P}&\big(\tilde Z_{\upsilon_n} > e^{\delta a_n} ,\tau_{n-m}=n-m\big) \leq \mathbf{E}\Big[1\wedge \Big(e^{-\delta a_n}\mathbf{E}\big[\tilde Z_n\mid\Pi\big]\Big);\tau_{n-m}=n-m\Big]\\
&\leq \mathbf{E}\Big[1\wedge \Big(e^{-\delta a_n}\mathbf{E}\big[\tilde Z_n\mid\Pi\big]\Big); S_{n-m} < S_{\upsilon_n},S_{\upsilon_n+1}, \ldots, S_{n-m-1}\Big] \\
&= \mathbf{E}\Big[1\wedge \Big(e^{-\delta a_n}\big(1+\sum_{i=0}^{\upsilon_n-1} \eta_{i+1} e^{S_{\upsilon_n}-S_i} \big)\Big)\Big] \mathbf{P}(\tau_{n-m-\upsilon_n}=n-m-\upsilon_n) \ .
\end{align*}
By \cite{abkv11}[Lemma 2.2]  $\mathbf{P}(\tau_n=n)$ is regularly varying, thus, as $n \to \infty$, 
 $\mathbf{P}(\tau_{n-m-\upsilon_n}=n-m-\upsilon_n)\sim \mathbf{P}(\tau_n=n)$. From Lemma 
\ref{le51} and the dominated convergence theorem, it results that
 \[\mathbf{E}\Big[1\wedge \Big(e^{-\delta a_n}\big(1+\sum_{i=0}^{\upsilon_n-1} \eta_{i+1} e^{S_{\upsilon_n}-S_i} \big)\Big)\Big]\rightarrow 0 \ .\] 
 Altogether, as $n \to \infty$, 
 \[ \mathbf P( \tilde Z_{\upsilon_n} > e^{\delta a_n} \mid \tau_{n-m}=n-m) \to 0 \ . \]
Recall that by definition of $\upsilon_n$, $n-\upsilon_n\rightarrow\infty$. Thus using \cite{abkv11}[Theorem 4.2] we get that  uniformly in $k\leq sn$
\[\mathbb{P}(Z_{\upsilon_n}>e^{\delta a_n}, Z_{n-k}>0) = o\big( \mathbb{P}(Z_{n-k}>0)\big)\ .\]
Altogether, from \eqref{eq18}
\[  \mathbb P_z (Z_{\upsilon_n}> e^{\delta a_n} z, Z_{n-k}>0) = o\big( z\mathbb{P}(Z_{n-k}>0)\big) \]
uniformly in $z \ge 1$ and $k \le sn$.
Applying this result to (\ref{eq7}) and changing to the measure $\mathbf{P}$ yields for any $\varepsilon >0$
\begin{align}
 \mathbb P( &Z_{\kappa_n+\upsilon_n} > e^{\delta a_n}Z_{\kappa_n} , Z_n >0)\leq  \varepsilon \sum_{z \ge 1, k \le s n}\mathbb P(\kappa_n=k,Z_k=z) z 
\mathbb P(Z_{n-k}>0)\nonumber \\
&\leq \varepsilon  \sum_{k \le s n}\gamma^k  \mathbf E\big[e^{-S_k} Z_k; \kappa_n=k\big]\mathbb P(Z_{n-k}>0)  \nonumber\\
&= \varepsilon  \sum_{k \le s n} \gamma^k \mathbf E\big[e^{-S_k} \mathbb{E}[Z_k\mid \Pi]; \kappa_n=k\big] \mathbb P(Z_{n-k}>0)\nonumber \\
&= \varepsilon  \sum_{k \le s n}\gamma^k  \mathbf{P}(\kappa_n=k) \mathbb P(Z_{n-k}>0) \ , \label{eq8}
\end{align}
if $n$ is large enough. Finally, applying \cite{abkv11}[Theorem 1.1], $\mathbb P(Z_{n}>0)\sim\theta \gamma^{n}/b_n$ and thus  for large $n$ 
\begin{align*}
 \mathbb P( &Z_{\kappa_n+\upsilon_n} > e^{\delta a_n}Z_{\kappa_n} \mid Z_n >0)\leq 2\varepsilon \frac{b_n}{b_{(1-s)n}} \ . 
\end{align*}
Taking the limit $n\rightarrow\infty$ and then $\varepsilon\rightarrow 0$
\[ \mathbb P( Z_{\kappa_n+\upsilon_n} > e^{\delta a_n}Z_{\kappa_n} \mid Z_n >0) \to 0\ . \]

Next, let us turn to the second summand in (\ref{eq6}). Applying similar arguments as before, 
\begin{align*}
 \mathbb P_z&(Z_{\upsilon_n} <z e^{-\delta a_n}, Z_{n-k}>0\mid\Pi)\leq z\mathbb{P}(Z_{\upsilon_n} <e^{-\delta a_n}, Z_{n-k}>0\mid\Pi)\\
 &+ z\mathbb{P}(Z_{n-k}>0\mid\Pi) \mathbb{P}_{z-1}\big(Z_{\upsilon_n}<(z-1) e^{-\delta a_n}\mid\Pi\big)\  \text{a.s.}
\end{align*}
As $e^{-\delta a_n}<1$, the first right-hand term vanishes, also for $ze^{-\delta a_n}<1$ the left-hand side  is $0$.  Thus the inequality becomes
\begin{align*}
 \mathbb P_z&(Z_{\upsilon_n} <z e^{-\delta a_n}, Z_{n-k}>0\mid\Pi)\\
&\leq z\mathbf{1}_{\{z\geq e^{\delta a_n}\}}\mathbb{P}(Z_{n-k}>0\mid\Pi) \mathbb{P}_{z-1}\big(Z_{\upsilon_n}<(z-1) e^{-\delta a_n}\mid\Pi\big)\  \text{a.s.}
\end{align*}
Conditioned on the environment, all $(z-1)$-many subtrees of the branching process are independent. Of these  subtrees, 
at most  $(z-1)e^{-\delta a_n} $-many may survive until generation $\upsilon_n$.  Thus, letting $Y$ 
be a binomially distributed random variable with parameters $(z-1,p)$, $p=\mathbb{P}(Z_{\upsilon_n}>0\mid\Pi)$, by means of Chebyshev's inequality for $z>1$ 
\begin{align*}
\mathbb{P}_{z-1}&\big(Z_{\upsilon_n}<(z-1) e^{-\delta a_n}\mid\Pi\big) \mathbf{1}_{\{z\geq e^{\delta a_n}\}} \le \mathbb{P}\big(Y<(z-1) e^{-\delta a_n}\mid\Pi\big) \mathbf{1}_{\{z\geq e^{\delta a_n}\}}\\
&\le \mathbf{1}_{\{e^{-\delta a_n}>p/2\}} + \mathbb{P}\big(Y<(z-1) p/2\mid\Pi\big) \mathbf{1}_{\{z\geq e^{\delta a_n}\}}\\
&\le \mathbf{1}_{\{e^{\delta a_n}p<2\}} + \frac 4{(z-1)p} \mathbf{1}_{\{z\geq e^{\delta a_n}\}}\\
&\le  \frac 2{e^{\delta a_n}p}+ \frac 4{(e^{\delta a_n}-1)p} \le \frac 6{(e^{\delta a_n}-1)p} \ . 
\end{align*}
It follows
\begin{align*}
\mathbb P_z(&Z_{\upsilon_n} < z e^{-\delta a_n}, Z_{n-k}>0) \le 6z \mathbb E\big[ \mathbb P(Z_{n-k}>0 \mid \Pi) \big(1\wedge ((e^{\delta a_n}-1)p)^{-1}\big)\big]\\
&\le 6z \mathbb E\big[ Z_{\upsilon_n} \mathbb P(Z_{n-k-\upsilon_n}>0 \mid \Pi, Z_{\upsilon_n} =1) \big(1\wedge ((e^{\delta a_n}-1)p)^{-1}\big)\big] \\
&= 6z \mathbb E\big[e^{S_{\upsilon_n}} \big(1\wedge ((e^{\delta a_n}-1)p)^{-1}\big)\big] \mathbb P(Z_{n-k-\upsilon_n}>0)\\
&= 6z \gamma^{\upsilon_n}\mathbf E\big[\big(1\wedge ((e^{\delta a_n}-1)p)^{-1}\big)\big] \mathbb P(Z_{n-k-\upsilon_n}>0)\ .
\end{align*}
From Lemma \ref{le52} and dominated convergence, we obtain
\[ \mathbf E\big[\big(1\wedge ((e^{\delta a_n}-1)p)^{-1}\big)\big] \to 0 \]
for $n \to \infty$, thus as above
\[  \mathbb P_z (Z_{\upsilon_n}<e^{-\delta a_n} z, Z_{n-k}>0) = o\big( z\mathbb{P}(Z_{n-k}>0)\big) \]
uniformly in $z \ge 1$ and $k \le sn$, and in much the same way as above we may conclude
\[ \mathbb P( Z_{\kappa_n+\upsilon_n} < e^{-\delta a_n}Z_{\kappa_n} \mid Z_n >0) \to 0 \]
as $n \to \infty$. From \eqref{eq6} we see that \eqref{aldous} is satisfied for any sequence of stopping times $\kappa_n$ such that $\kappa_n \le sn$ with some $s<1$.

Finally, we show that \eqref{aldous} also holds for all stopping times $\kappa_n\leq n$. Let $0\leq s<1$. Then we get that for large $n$ 
\begin{align*}
 \mathbb{P}&\big(\tfrac 1{a_n}\big|\log Z_{(\kappa_n+\upsilon_n)\wedge n}-\log Z_{\kappa_n}\big|>\delta\mid Z_n>0\big)  \\
&\leq \mathbb{P}\big(\tfrac 1{a_n}\big|\log Z_{\kappa_n\wedge sn+\upsilon_n}-\log Z_{\kappa_n\wedge sn}\big|>\delta\mid Z_n>0\big)\\
&\quad + 
2\mathbb{P}\Big(\tfrac 1{a_n}\sup_{s\leq t\leq 1} \big|\log Z_{nt}-\log Z_{n}\big|>\delta\mid Z_n>0\Big)\ . 
\end{align*}
Since $\kappa_n \wedge sn$ is again a stopping time, taking the $\limsup$, the first term above vanishes for every $0\leq s<1$. Thus it is enough to show that the second term can be made arbitrarily small in the limit, 
if we choose $s$ close enough to 1. Now
\begin{align*}
 \mathbf{1}_{\{Z_n>0\}}&\tfrac 1{a_n}\sup_{s\leq t\leq 1} \big|\log Z_{nt}-\log Z_{n}\big|\\
 &\le \mathbf{1}_{\{Z_n>0\}}\tfrac 1{a_n}\sup_{s\leq t\leq 1} \big(\log Z_{nt}+\log Z_{n}\big)\\
 &\leq  \tfrac 1{a_n}\sup_{0\leq t\leq 1} \big(\log^+ Z_{nt}- (S_{nt}-S_n)\big)+ \tfrac 1{a_n} \sup_{s\leq t\leq 1}| S_{nt}-S_n|+\tfrac 1{a_n} \log^+ Z_n \ .
\end{align*}
Conditioned on $Z_n>0$, $Z_n$ has a limiting distribution on $\mathbb{N}$, see \cite{abkv11}[Theorem 1.2]. Thus, as $n\rightarrow\infty$, for $\delta >0$
\[\lim_{n\rightarrow\infty}\mathbb{P}\big(\tfrac 1{a_n} \log^+ Z_n>\delta\mid Z_n>0\big)=0\ . \]
As to the second term, using \cite{abkv11}[Theorem 1.3] and letting $n\rightarrow \infty$, for every $\varepsilon>0$, 
\begin{align*}
  \mathbb{P}\big(\tfrac 1{a_n} \sup_{s \leq t\leq 1} |S_{n}-S_{nt}|> \delta \mid Z_n>0\big)<\varepsilon
\end{align*}
if only $s$ is chosen close enough to 1. 

Finally, as conditioned on $\Pi$, $Z_{nt}e^{-S_{nt}}$ is a non-negative martingale with mean 1, we use the Doob inequality to get that
\begin{align*}
 \mathbb{P}&\big(\tfrac 1{a_n}\sup_{0\leq t\leq 1} \big(\log Z_{nt}- (S_{nt}-S_n)\big)>\delta, Z_n>0\big)\\
&\leq \mathbb{P}\big(\sup_{0\leq t\leq 1} Z_{nt}e^{-S_{nt}+S_n}>e^{\delta a_n}\big)\\
&= \mathbb{E}\Big[ \mathbb{P}\big(\sup_{0\leq t\leq 1} Z_{nt}e^{-S_{nt}}>e^{\delta a_n-S_n}\mid \Pi\big)\Big]\\
&\leq \mathbb{E}\big[e^{S_n-\delta a_n} \big] = \gamma^n e^{-\delta a_n} \ .
\end{align*}
Again by \cite{abkv11}[Theorem 1.1], $\gamma^n e^{-\delta a_n}=o\big(\mathbb{P}(Z_n>0)\big)$. Altogether, this yields 
\[ \limsup_{n\rightarrow\infty} \mathbb{P}\big(\tfrac 1{a_n}\sup_{s\leq t\leq 1} \big|\log Z_{nt}-\log Z_{n}\big|>3\delta\mid Z_n>0\big)\leq \varepsilon \]
if only $s$ is close enough to 1. Thus we have proved \eqref{aldous} for every sequence of stopping times $\kappa_n\leq n$ which yields tightness.
\end{proof}

\end{document}